\DeclareMathOperator{\Hom}{Hom}
\DeclareMathOperator{\tr}{tr}
\DeclareMathOperator{\id}{id}
\DeclareMathOperator{\End}{End}
\DeclareMathOperator{\Sym}{Sym}
\DeclareMathOperator{\Aut}{Aut}
\DeclareMathOperator{\Res}{Res}
\DeclareMathOperator{\Ind}{Ind}
\DeclareMathOperator{\Mod}{-Mod}
\DeclareMathOperator{\Irr}{Irr}
\DeclareMathOperator{\FI}{FI}
\DeclareMathOperator{\Rat}{Rat}
\DeclareMathOperator{\Conf}{Conf}
\DeclareMathOperator{\Frob}{Frob}
\DeclareMathOperator{\UConf}{UConf}
\DeclareMathOperator{\Gal}{Gal}
\DeclareMathOperator{\stab}{stab}
\newtheorem{thm}{Theorem}[section]
\newtheorem{prop}[thm]{Proposition}
\newtheorem{cor}[thm]{Corollary}
\begin{document}

\title{Representation theory and arithmetic statistics\\ of spaces of 0-cycles}
\author{Kevin Casto}
\maketitle

\begin{abstract}
We continue the study of a general class of spaces of 0-cycles on a manifold defined and begun by Farb-Wolfson-Wood \cite{FWW}. Using work of Gadish \cite{Ga1} on linear subspace arrangements, we obtain representation stability for the cohomology of the ordered version of these spaces. We establish subexponential bounds on the growth of unstable cohomology, and the Grothendieck-Lefschetz trace formula then allows us to translate these topological stability phenomena to stabilization of statistics for spaces of 0-cycles over finite fields. In particular, we show that the average value of certain arithmetic quantities associated to rational maps over finite fields stabilizes as the degree goes to infinity.
\end{abstract}

\section{Introduction}
In \cite{FWW}, Farb-Wolfson-Wood studied a general class of spaces of 0-cycles on a manifold \(X\) that they call \(\mathcal{Z}^{(d_1, \dots d_m)}_n(X)\), which is the subspace of \(\prod_i \Sym^{d_i}(X)\) where no point of \(X\) appears \(n\) (or more) times in every \(\Sym^{d_i}(X)\). (Here \(\Sym^d(X) := X^d/S_d\)). In particular, \(\mathcal{Z}^{d}_2(X) = \UConf_d(X)\), the space of unordered configurations of \(d\) points in \(X\), and \(\mathcal{Z}^{(d, \dots, d)}_1(\mathbb{C}) = \Rat^*_d(\mathbb{CP}^{m-1})\), the space of degree \(d\), based rational maps \(\mathbb{CP}^1 \to \mathbb{CP}^{m-1}\) with \(f(\infty) = [1 : \cdots : 1]\). They analyze these spaces in detail, and among other results, prove that they satisfy homological stability.

In order to study the spaces \(\mathcal{Z}^{(d_1, \dots d_m)}_n(X)\), Farb-Wolfson-Wood pass to an ordered version that they denote \(\widetilde{\mathcal{Z}}^{(d_1, \dots d_m)}_n(X)\), which is the subspace of \(\prod_i X^{d_i}\) where no point of \(X\) appears \(n\) or more times in every \(X^{d_i}\). The space \(\widetilde{\mathcal{Z}}^{(d_1, \dots d_m)}_n(X)\) has an action of 
\[S_{\mathbf d} := S_{d_1} \times \cdots \times S_{d_m}\]
with quotient \(\mathcal{Z}^{(d_1, \dots d_m)}_n(X)\). They analyze the Leray spectral sequence for the inclusion \(\widetilde{\mathcal{Z}}^{\mathbf d}_n(X) \hookrightarrow X^{d_1 + \cdots + d_m}\), generalizing the Leray spectral sequence of the inclusion \(\Conf_n(X) \hookrightarrow X^n\) studied by Totaro \cite{To}. The key point is that locally, \(\widetilde{\mathcal{Z}}^{\mathbf d}_n\) looks like the complement of a linear subspace arrangement. The combinatorics of this subspace arrangement are substantially more complicated than in the specific casef of \(\Conf_n\), but they are able to use the theory of ``shellability'' of partial orders to understand it. By transfer, \(H^*(\mathcal{Z}^{\mathbf d}_n) = H^*(\widetilde{\mathcal{Z}}^{\mathbf d}_n)^{S_{\mathbf d}}\), so by understanding the invariants of the spectral sequence, they are able to draw conclusions about \(H^*(\mathcal{Z}^{\mathbf d}_n)\).

\subsection*{Representation stability}

In this paper, we extend \cite{FWW} by studying \(H^*(\mathcal{Z}^{\mathbf d}_n)\) as an \(S_{\mathbf d}\)-representation. To wit, we consider all such \(\mathbf d\) simultaneously, and think of \(H^*(\mathcal{Z}^{\mathbf d}_n)\) as an \emph{\(\FI^m\)-module}. Recall that FI is the category of finite sets and injections studied by Church-Ellenberg-Farb \cite{CEF}. \(\FI^m\) is just the product category, which was studied by Gadish \cite{Ga2}; see \S2 for a precise definition of \(\FI^m\)-modules. By analyzing the Leray spectral sequence mentioned, we prove that \(H^*(\mathcal{Z}^{\mathbf d}_n)\) is \emph{finitely generated} as an \(\FI^m\)-module, meaning that it has a finite subset that is not contained in any smaller sub-\(\FI^m\)-module.

Our work implies the following results about \(H^*(\widetilde{\mathcal{Z}}^{\mathbf d}_n)\). Recall that the irreducible representations of \(S_{\mathbf d}\) are parameterized by lists of partitions \(\mathbf \lambda = (\lambda^1, \dots, \lambda^m)\), where \(\lambda^i\) is a partition of \(d_i\). Denote the irreducible representation of \(S_{\mathbf d}\) associated to \(\mathbf \lambda\) by \(\Irr(\mathbf \lambda)\). For each \(i, j\), let \(X^i_j\) be the class function on \(\bigcup_{\mathbf d} S_{\mathbf d}\) that counts the number of \(j\)-cycles on \(S_{d_i}\). Define a \emph{character polynomial} to be an element of \(\mathbb{Q}[\{X^i_j\}]\).

\begin{thm}[\bfseries Polynomiality of characters and representation stability]
\thlabel{top_fg}
Fix \(n\) and \(i\), let \(X\) be a connected manifold of dimension at least 2, and let \(V_{\mathbf d} = H^i(\widetilde{\mathcal{Z}}^{\mathbf d}_n; \mathbb{Q})\). Then: \begin{enumerate}
\item There is a single character polynomial \(P \in \mathbb{Q}[\{X^i_j\}]\) such that the character \(\chi_{V_{\mathbf d}} = P\) for all \(d_i \gg 0\). In particular, there is a polynomial \(Q \in \mathbb{Q}[y_1, \dots, y_m]\) such that \(\dim V_{\mathbf d} = Q(d_1, \dots, d_m)\) for all \(d_i \gg 0\).
\item The multiplicity of each irreducible \(S_{\mathbf d}\)-representation in \(V_{\mathbf d}\) is independent of \(\mathbf d\) when all \(d_i \gg 0\).
\end{enumerate}
\end{thm}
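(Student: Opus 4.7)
The plan is to show that the collection $V_{\mathbf d} = H^i(\widetilde{\mathcal{Z}}^{\mathbf d}_n; \mathbb{Q})$ assembles into a finitely generated $\FI^m$-module, and then invoke Gadish's structure theory \cite{Ga2}: both conclusions of the theorem — eventual polynomiality of characters (hence of dimensions) and eventual stability of irreducible multiplicities — are formal consequences of finite generation over $\mathbb{Q}$.

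First I would make the $\FI^m$-module structure explicit. Any $\FI^m$-morphism $\mathbf d \hookrightarrow \mathbf d'$ is determined by coordinate inclusions $[d_i] \hookrightarrow [d_i']$, and these induce a projection $\prod_i X^{d_i'} \to \prod_i X^{d_i}$. I would check that this restricts to a map $\widetilde{\mathcal{Z}}^{\mathbf d'}_n(X) \to \widetilde{\mathcal{Z}}^{\mathbf d}_n(X)$: the condition defining $\widetilde{\mathcal{Z}}^{\mathbf d'}_n$ requires that for every $x \in X$ some factor contains $x$ fewer than $n$ times, and deleting coordinates can only decrease these multiplicities, so the condition is preserved. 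Pulling back in cohomology yields the desired $S_{\mathbf d'}$-equivariant structure maps $V_{\mathbf d} \to V_{\mathbf d'}$.

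Next I would run the Leray spectral sequence of the inclusion $\widetilde{\mathcal{Z}}^{\mathbf d}_n(X) \hookrightarrow X^{d_1+\cdots+d_m}$ studied by FWW. The naturality of the projections above makes every page an $\FI^m$-module, and FWW describe the $E_2$ page via the combinatorics of a local linear subspace arrangement whose stratification is essentially independent of $\mathbf d$: each local stratum is specified by a choice of ``collisions,'' i.e.\ subsets of coordinates across the $m$ factors that are forced to agree. This is exactly the class of arrangements handled by Gadish's machinery on $\FI^m$-indexed subspace arrangements \cite{Ga1}, so his results give that the $E_2$ page is a finitely generated $\FI^m$-module. By Noetherianity of finitely generated $\FI^m$-modules over $\mathbb{Q}$ (also due to Gadish), each subsequent page is finitely generated, hence so is $E_\infty$; since $V_{\mathbf d}$ carries a finite filtration with associated graded $\bigoplus_{p+q=i} E_\infty^{p,q}$, the module $\{V_{\mathbf d}\}$ is finitely generated as an $\FI^m$-module, and the theorem follows from Gadish's character polynomiality and multiplicity stability results.

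The main obstacle is reconciling FWW's combinatorial/shellability description of $E_2$ — phrased in terms of a poset of matchings coupled across $m$ factors — with the $\FI^m$-arrangement framework in a way that is uniform in $\mathbf d$. Concretely, one has to exhibit a finite set of ``orbit types'' of local collision patterns that generates the combinatorics at every $\mathbf d$ under the $\FI^m$-action, and to verify that FWW's contributions to $E_2$ in these orbits organize as finitely generated $\FI^m$-modules. Once this bookkeeping is carried out, the spectral sequence argument and the invocation of \cite{Ga2} are essentially formal.
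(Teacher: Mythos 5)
Your overall strategy matches the paper's: assemble $H^i(\widetilde{\mathcal{Z}}^{\mathbf d}_n)$ into an $\FI^m$-module (Theorem 3.2), show it is finitely generated via the Leray spectral sequence for $\widetilde{\mathcal{Z}}^{\mathbf d}_n(X) \hookrightarrow X^{\mathbf d}$ plus Gadish's work on subspace arrangements and Noetherianity, and then invoke a structure theorem for finitely generated $\FI^m$-modules (Theorem 2.6).

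Two caveats, one of which is substantive. First, the attribution: the paper does not import the character-polynomiality and multiplicity-stability structure theorem from Gadish. Gadish~\cite{Ga2} supplies the Noetherian property; the structure theorem for finitely generated $\FI^m$-modules (Theorem 2.6 in the paper) is proved in $\S 2$ via the Nagpal--Ramos shift-functor/filtered-module method (Theorem 2.5), and is also independently due to Li--Yu. So either you must supply that proof or cite a source that actually contains it.

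Second, and more importantly, the step where you declare that Gadish's arrangement machinery ``gives that the $E_2$ page is a finitely generated $\FI^m$-module'' is where the real work lives, and your proposal leaves it at ``bookkeeping.'' The $E_2$ page is not itself the cohomology of an $\FI^m$-indexed arrangement; Gadish's Theorem B applies only to the \emph{leftmost column} $E_2^{0,*} \cong H^*(\widetilde{\mathcal{Z}}_n(\mathbb{R}^N))$, which is the stalkwise/local arrangement. To pass from there to the whole page, the paper uses a specific algebraic input from Farb--Wolfson--Wood: the $E_2$ page is generated, as a bigraded $\FI^m$-algebra, by $E_2^{*,0} \cong H^*(X^\bullet)$ (finite type by the K\"unneth-type Theorem 2.7(4)) together with $E_2^{0,*}$ (finite type by Gadish). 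Finite generation of the full $E_2$ then comes from the finite-type-algebra lemma (Theorem 2.7(2)), not from a direct ``orbit types'' argument on collision patterns. Your sketch gestures at the right obstacle but does not identify this algebra-generation statement as the mechanism, and without it the passage from the local arrangement to the global $E_2$ page is genuinely incomplete.
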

Plugging  in \(\mathbf d = (d)\) and \(n = 2\) into \thref{top_fg} recovers representation stability for \(\Conf_d(X)\), as proven by Church \cite[Thm 1]{Chu} and Church-Ellenberg-Farb \cite[Thm 1.8]{CEF}. Furthermore, just looking at the multiplicity of the trivial representation in \thref{top_fg}.2 recovers \cite[Thm 1.6]{FWW}.

Furthermore, when \(X\) is a smooth complex variety, Farb-Wolfson-Wood obtain results about the mixed Hodge structure of \(\mathcal{Z}^{\mathbf d}_n(X)\). We extend their analysis by considering the case when \(X\) is a smooth scheme over \(\mathbb{Z}[1/N]\). Thus we can consider the complex points \(X(\mathbb{C})\), but also the finite field points \(X(\mathbb{F}_q)\). We then obtain results on \emph{\'etale representation stability} in the sense of \cite{FW2} (see \S4 below for the precise definitions).

\begin{thm}[\bfseries \'Etale representation stability]
\thlabel{etale_stab}
Let \(X\) be a smooth scheme over \(\mathbb{Z}[1/N]\) with geometrically connected fibers that is smoothly compactifiable. Let \(K\) be either a number field or a finite field over \(\mathbb{Z}[1/N]\). For each \(n\) and \(i\), the \emph{\(\Gal(\overline{K}/K)\)-\(\FI^m\)-module} \(H^i_{\acute et}(\widetilde{\mathcal{Z}}^{\bullet}_n(X); \mathbb{Q}_l)\) is finitely generated.
\end{thm}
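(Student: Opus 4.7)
The plan is to run the proof of \thref{top_fg} in the étale setting, carefully tracking the Galois action throughout. The core observation is that the topological argument factors through combinatorial input --- shellability of the intersection poset of the relevant linear subspace arrangement --- that is insensitive to the base field or topology, together with spectral-sequence machinery that admits an étale analogue. Since \cite{Ga2} already establishes that $\FI^m$-modules form a Noetherian category, the bulk of the work lies in producing and analyzing the relevant $E_2$ page.

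First, I would construct the $l$-adic étale Leray spectral sequence for the open immersion $\widetilde{\mathcal{Z}}^{\mathbf{d}}_n(X) \hookrightarrow X^{d_1+\cdots+d_m}$ over $\Spec \mathbb{Z}[1/N]$ and then base change to $K$. Because $X$ is smooth and smoothly compactifiable, étale purity applies and $\widetilde{\mathcal{Z}}^{\mathbf{d}}_n$ is étale-locally the complement of a linear subspace arrangement inside $X^{d_1+\cdots+d_m}$, exactly as in the topological situation of \cite{FWW}. The $E_2$ page is therefore computed by the same combinatorial recipe, tensored with ordinary étale cohomology of products of $X$, and the whole spectral sequence is $\Gal(\overline{K}/K)$-equivariant and $\FI^m$-functorial by the naturality of the Leray construction. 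Next, I would verify finite generation of $E_2$ as a $\Gal$-$\FI^m$-module from two inputs: the étale Künneth formula, which expresses the cohomology of $X^{d_1+\cdots+d_m}$ as a tensor power and feeds into the same Gadish-style argument underlying \thref{top_fg}; and finite generation of the combinatorial contributions, which are the same arrangement-cohomology objects as in \cite{Ga1}, now carrying a natural Galois action coming from Tate twists on the normal bundles of the strata. Noetherianity then propagates finite generation through the differentials and extensions to $E_\infty$, and hence to $H^i_{\text{\'et}}(\widetilde{\mathcal{Z}}^{\bullet}_n(X); \mathbb{Q}_l)$.

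The main obstacle will be the identification of the étale $E_2$ page in the terms above. In the topological case FWW exploit classical tubular neighborhoods and a local homotopy equivalence with a model subspace arrangement; in the étale setting these must be replaced by étale purity, proper base change, and descent along a smooth compactification --- precisely what the smooth compactifiability hypothesis is designed to supply, and what restricts $K$ to be a number field or finite field over $\mathbb{Z}[1/N]$. Once the $E_2$ page is correctly identified as a $\Gal$-equivariant $\FI^m$-module, the propagation of finite generation through differentials and extensions to the abutment is essentially formal from \cite{Ga2}.
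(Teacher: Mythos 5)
Your proposal is broadly aligned with the paper's strategy: direct analysis of the étale Leray spectral sequence for $\widetilde{\mathcal{Z}}^{\mathbf d}_n(X) \hookrightarrow X^{\mathbf d}$, identification of the $E_2$ page as generated by $H^*_{\acute et}(X^\bullet)$ and the arrangement cohomology, and propagation of finite generation via Noetherianity of $\FI^m$-modules from \cite{Ga2}. However, on the key technical step you drift toward a different route than the one the paper actually takes, and you leave the crucial ingredient unnamed.

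You write that the topological tubular-neighborhood argument ``must be replaced by étale purity, proper base change, and descent along a smooth compactification.'' The paper explicitly considers and then \emph{rejects} the compactify-and-compare strategy (attributed to \cite{FW2} and \cite{Ca2}), precisely because constructing and controlling a compactification of $\widetilde{\mathcal{Z}}^{\mathbf d}_n(X)$ would require substantial technical work. Instead it proceeds directly: the concrete input is \cite[Lem 16.8]{Mi}, which produces, for each point of the big diagonal, an étale neighborhood $V \subset X$ with an étale map $F: V \to \mathbb{A}^m$ carrying the pair $(\Delta^{\mathbf d}_n(X), X^{\mathbf d})$ to $(\Delta^{\mathbf d}_n(\mathbb{A}^m), (\mathbb{A}^m)^{\mathbf d})$ étale-locally. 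This local identification yields an isomorphism of stalks $(R^q j_{X*}\mathbb{Z})_x \cong (R^q j_{\mathbb{A}^m*}\mathbb{Z})_y$, which then lets one transport the sheaf-level decomposition $R^q j_{X*}\mathbb{Z} \cong \bigoplus_{I \in \Pi^{\mathbf d}_n} \epsilon_I(q)$ already verified by \cite{FWW} for $\mathbb{A}^m$. Your proposal gestures at ``étale-locally the complement of a linear subspace arrangement'' --- the right intuition --- but does not name the lemma that makes this precise, and instead invokes compactification descent, which is not what the proof does. (A smaller point: you attribute the restriction on $K$ to the compactifiability hypothesis; these are independent assumptions in the theorem.) So the proposal would need to be repaired at exactly the step the paper flags as the one requiring genuine care, by replacing the compactification/descent appeal with the étale-local model comparison.
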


\subsection*{Stability of arithmetic statistics}

We would like to use \thref{etale_stab} to obtain asymptotic statistics for the number of points of \(\mathcal{Z}^{\mathbf d}_n(X)(\mathbb{F}_q)\). To do so, we need the next result, which gives bounds on \(H^i(\mathcal{Z}^{\mathbf d}_n(X))\) as \(i\) varies.

\begin{thm}[\bfseries Convergence]
Let \(X\) be a connected manifold of dimension at least 2 with \(\dim H^*(X) < \infty\). Then for each character polynomial \(P\), the inner product \(|\langle P, H^i(\mathcal{Z}^{\mathbf d}_n(X))\rangle|\) is bounded subexponentially in \(i\) and uniformly in \(n\).
\end{thm}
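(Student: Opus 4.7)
The strategy is to control the inner product via the Leray spectral sequence of the inclusion $\widetilde{\mathcal{Z}}^{\mathbf d}_n(X) \hookrightarrow X^{d_1+\cdots+d_m}$ used throughout Farb-Wolfson-Wood's work. By transfer $H^i(\mathcal{Z}^{\mathbf d}_n) = H^i(\widetilde{\mathcal{Z}}^{\mathbf d}_n)^{S_{\mathbf d}}$, and character-polynomial pairings are additive on $S_{\mathbf d}$-equivariant filtrations, so since $E_\infty^{p,q}$ is a subquotient of $E_2^{p,q}$ it suffices to bound
$$\sum_{p+q=i}\bigl|\langle P, E_2^{p,q}\rangle\bigr|$$
subexponentially in $i$ and uniformly in $n$.

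By Gadish's Goresky-MacPherson-style decomposition of the $E_2$ page for complements of subspace arrangements with a group action, one has
$$E_2^{p,q} \;\cong\; \bigoplus_{[\sigma]}\Ind_{\stab(\sigma)}^{S_{\mathbf d}}\!\bigl(\epsilon_\sigma \otimes H^{q'(\sigma)}(X^{|\sigma|})\bigr),$$
indexed by $S_{\mathbf d}$-orbits of ``forbidden collision patterns'' $\sigma$ cutting out the complement of $\widetilde{\mathcal{Z}}^{\mathbf d}_n$, with a sign-type local system $\epsilon_\sigma$ and grading shifts determined by $\dim X$ and the combinatorics of $\sigma$. Because $\dim X \ge 2$, each unit of Leray degree $p$ costs at least one collision in $\sigma$, so only $\sigma$ of total complexity $O(i)$ contribute in total degree $\le i$; such orbit types are counted by partition-like combinatorial data, whose number is subexponential in $i$. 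Uniformity in $n$ is then essentially automatic: increasing $n$ only \emph{removes} forbidden patterns, so the $E_2$ page for any $n$ is bounded orbit-by-orbit by that for $n = 2$.

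For each orbit, Frobenius reciprocity rewrites $\langle P, \Ind_{\stab(\sigma)}^{S_{\mathbf d}}(\epsilon_\sigma \otimes H^*(X^{|\sigma|}))\rangle$ as $\langle \Res P, \epsilon_\sigma \otimes H^*(X^{|\sigma|})\rangle_{\stab(\sigma)}$. Since $P$ has fixed degree and $\dim H^*(X) < \infty$, an adaptation of the Church-Ellenberg-Farb estimate for pairings with character polynomials bounds each contribution by a constant depending only on $P$, $\dim H^*(X)$, and the combinatorial shape of $\sigma$, with no residual $\mathbf d$-dependence. Summing the subexponentially many such contributions over all bidegrees with $p+q = i$ yields the claimed bound.

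The main obstacle will be synchronizing Gadish's combinatorial bookkeeping with the Frobenius-reciprocity estimate: one must verify both that the per-orbit pairing truly absorbs all $\mathbf d$-dependence (no hidden polynomial-in-$\mathbf d$ factors sneak in through the induction) and that the subexponential rate in $i$ survives the outer summation over orbit types and bidegrees. Once this accounting is in place, the uniformity in $n$ drops out from the monotonicity of arrangements described above, and the theorem follows.
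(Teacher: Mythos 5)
Your overall strategy — Leray spectral sequence for $\widetilde{\mathcal{Z}}^{\mathbf d}_n \hookrightarrow X^{\mathbf d}$, orbit decomposition of the $E_2$ page indexed by collision patterns, Frobenius reciprocity to localize the pairing on a stabilizer, then count orbit shapes — is precisely the route the paper takes, via the $n$-equals partition lattice $\Pi^{\mathbf d}_n$ and the sheaves $\epsilon_I(q)$ from \cite{FWW}. So the architecture is right, and the reduction to $\dim(H^i)^{S_{\mathbf d - \mathbf a}}$ via Proposition 5.1 is also the paper's first step. However, the step you flag as the ``main obstacle'' is exactly where a genuine gap sits, and it is not resolved in your sketch.

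The issue is the $\mathbf d$-dependence after Frobenius reciprocity. The orbit-by-orbit term is not $\langle \Res P, \epsilon_\sigma \otimes H^*(X^{|\sigma|})\rangle_{\stab(\sigma)}$ over a \emph{bounded} number of $X$-factors: after quotienting by $\stab I / (S_I \cap S_{\mathbf d - \mathbf a})$, the space $X_I$ becomes a product that includes factors of the form $\Sym^{k-b} X$ and $\Sym^{l_i - c_i} X$, where $l_i$ (the number of singleton blocks in column $i$) grows unboundedly with $\mathbf d$. Asserting that ``an adaptation of the Church-Ellenberg-Farb estimate bounds each contribution by a constant... with no residual $\mathbf d$-dependence'' is exactly the claim that needs proving, and finite-dimensionality of $H^*(X)$ alone does not give it — $\dim H^p(\Sym^N X)$ could a priori blow up with $N$. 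The paper closes this hole by invoking Macdonald's theorem that $\dim H^p(\Sym^N X;\mathbb{Q})$ is eventually independent of $N$ and that the Poincar\'e series is a rational function (bounded by a polynomial in $p$); this is the crucial external input that makes the per-orbit bound uniform in $\mathbf d$. Without some such argument your proof does not close.

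Two smaller inaccuracies: the collision count is governed by the fiber degree $q$ (the $\epsilon_I(q)$ vanish unless $I$ has exactly $k = q/(d(mn-1)-1)$ non-singleton blocks), not by the base degree $p$ as you state; and your monotonicity-in-$n$ remark, while a pleasant observation, is not what the proof actually needs — the substance is uniformity in $\mathbf d$, which is what the bounded-shape count plus the Macdonald bound deliver. Finally, the paper's count of chains contributing to $\epsilon_I(q)^{S_I \cap S_{\mathbf d - \mathbf a}}$ gives an explicitly polynomial bound of the form $\binom{k}{a}a!$, sharper than the subexponential estimate you aim for; worth recording since it simplifies the final summation over bidegrees.
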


Finally, we use the Grothendieck-Lefschetz trace formula, along with Theorems 1.2 and 1.3, to obtain the following results on arithmetic statistics.
\begin{thm}[\bfseries Stability of arithmetic statistics]
Let \(X\) be a smooth quasiprojective scheme over \(\mathbb{Z}[1/N]\) with geometrically connected fibers. Then for any \(n\) and any character polynomial \(P\),
\[ \lim_{\mathbf d \to \infty} q^{-|\mathbf d| \dim X} \sum_{y \in \mathcal{Z}^{\mathbf d}_n(X)(\mathbb{F}_q)} P(y) = \sum_{i=0}^\infty (-1)^i \tr\left(\Frob_q : \langle H^i(\widetilde{\mathcal{Z}}^{\mathbf d}_n(X)), P \rangle\right) \]
Namely, both the limit on the left and the series on the right converge, and they converge
to the same limit.
\end{thm}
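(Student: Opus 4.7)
The plan is to establish the identity pointwise in $\mathbf d$ via a twisted Grothendieck-Lefschetz trace formula, and then use Theorems 1.2 and 1.3 to justify passing to the limit. The \'etale cover $\widetilde{\mathcal{Z}}^{\mathbf d}_n \to \mathcal{Z}^{\mathbf d}_n$ is an $S_{\mathbf d}$-torsor, so for $y \in \mathcal{Z}^{\mathbf d}_n(\mathbb{F}_q)$ the quantity $P(y)$ is the class function $P$ evaluated on the conjugacy class of $\Frob_q$ acting on the fiber over $y$. The twisted Lefschetz formula, as employed in \cite{CEF, FW2}, then yields
\[\sum_{y \in \mathcal{Z}^{\mathbf d}_n(\mathbb{F}_q)} P(y) = \sum_{j} (-1)^j \tr\bigl(\Frob_q \mid \langle H^j_c(\widetilde{\mathcal{Z}}^{\mathbf d}_n(X)_{\overline{\mathbb{F}_q}}; \mathbb{Q}_l),\ P\rangle\bigr).\]
Since $\widetilde{\mathcal{Z}}^{\mathbf d}_n$ is a smooth scheme of dimension $|\mathbf d|\dim X$ over $\mathbb{F}_q$, Poincar\'e duality transports this expression to ordinary cohomology, with the Tate twist producing exactly the factor $q^{-|\mathbf d|\dim X}$ in the statement.

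With the pointwise identity in hand for every $\mathbf d$, I next show convergence of the right-hand series. Each term is bounded by $\dim\langle H^i,P\rangle$ times the absolute values of the Frobenius eigenvalues on that piece. After the Poincar\'e-duality normalization, Deligne's Weil II purity bounds force exponential decay in $i$ from the eigenvalue contribution, while Theorem 1.3 bounds the dimension subexponentially in $i$ uniformly in $n$ (and, via a transfer argument relating multiplicities on $H^i(\widetilde{\mathcal{Z}}^{\mathbf d}_n)$ to those on $H^i(\mathcal{Z}^{\mathbf d}_n)$ tensored with characters of $S_{\mathbf d}$, uniformly in $\mathbf d$ as well). The product of exponential decay with subexponential growth is summable, yielding absolute convergence of the series for each $\mathbf d$ with a tail bound that is independent of $\mathbf d$.

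To pass to the limit $\mathbf d \to \infty$, I invoke \thref{etale_stab}: for each fixed $i$, \'etale representation stability ensures that the Galois representation $\langle H^i(\widetilde{\mathcal{Z}}^{\mathbf d}_n),P\rangle$ is eventually constant in $\mathbf d$, so the trace $\tr(\Frob_q \mid \langle H^i,P\rangle)$ stabilizes. Combined with the uniform-in-$\mathbf d$ tail bound from the previous paragraph, a dominated-convergence argument allows the limit $\mathbf d \to \infty$ to be interchanged with the infinite sum, simultaneously establishing convergence of the left-hand side and equality of the two limits.

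The main obstacle is the convergence step above: one must combine Deligne's purity, which is cleanest on compactly supported cohomology of smooth varieties, with the subexponential bound of Theorem 1.3 on $H^i(\mathcal{Z}^{\mathbf d}_n)$, carefully bookkeeping the transition through Poincar\'e duality and $S_{\mathbf d}$-transfer so that the resulting tail bound is uniform both in $i$ (for summability) and in $\mathbf d$ (to justify the limit interchange). Once this uniform exponential tail is established, the rest of the argument follows the standard swap-of-limits template familiar from the configuration space case \cite{CEF, FW2}.
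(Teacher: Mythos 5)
Your proposal is correct and follows essentially the same path as the paper: the paper factors the argument into a general lemma (Theorem 5.1), which establishes the pointwise identity via twisted Grothendieck--Lefschetz, transfer, triviality of the pullback, and Poincar\'e duality, and then combines Deligne's Weil II bound (exponential decay) with the subexponential multiplicity bound from Theorem 1.3 to get a tail bound uniform in $\mathbf d$, before invoking Theorem 1.2 and a dominated-convergence swap; Theorem 1.4 is then the specialization to $Z = \widetilde{\mathcal{Z}}^{\bullet}_n(X)$. The one thing worth flagging is that the paper's Theorem 1.3 is stated with ``uniformly in $n$'' and with $H^i(\mathcal{Z}^{\mathbf d}_n)$, but what is actually proved (and used) is a bound on $\dim H^i(\widetilde{\mathcal{Z}}^{\mathbf d}_n)^{S_{\mathbf d - \mathbf a}}$ uniform in $\mathbf d$ --- your parenthetical transfer remark resolves this the same way the paper does implicitly.
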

In particular, Theorems 1.2-1.4 recover the results for \(\Conf_d(X)\) proven by Farb-Wolfson \cite[Thm C]{FW2}, when \(\mathbf d = (d)\) and \(n = 2\).

In the case where \(\mathbf d = (\overbrace{d,\dots,d}^{m})\), \(n = 1\), and \(X = \mathbb{A}^1\), where \(\mathcal{Z}^{(d, \dots, d)}_1(\mathbb{A}^1) = \Rat^*_d(\mathbb{P}^{m-1})\), we obtain the following result about rational maps over finite fields.

\begin{thm}
For any prime power \(q\) and any character polynomial \(P\), 
\[ \lim_{d \to \infty} q^{-m d} \sum_{f \in \Rat^*_d(\mathbb{P}^{m-1})(\mathbb{F}_q)} P(f) = \sum_{i=0}^\infty (-1)^i \tr\left(\Frob_q : \langle H^i(\widetilde{\Rat}^*_d(\mathbb{CP}^{m-1})), P \rangle\right) \]
Namely, both the limit on the left and the series on the right converge, and they converge
to the same limit.
\end{thm}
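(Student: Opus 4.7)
The plan is to deduce this theorem as the special case of Theorem 1.4 with $X = \mathbb{A}^1$, $\mathbf{d} = (d,\dots,d)$, and $n=1$, using the identification $\mathcal{Z}^{(d,\dots,d)}_1(\mathbb{A}^1) = \Rat^*_d(\mathbb{P}^{m-1})$ recalled in the introduction. Since $\mathbb{A}^1$ is smooth quasiprojective over $\mathbb{Z}$ with geometrically connected fibers, the hypotheses of \thref{etale_stab} and of Theorem 1.4 on the ambient scheme are automatic, so the étale representation stability and the cohomological identities underlying the general trace formula argument are available.

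The one point that requires care is that the Convergence theorem (Theorem 1.3) is stated under the hypothesis $\dim X \geq 2$, whereas here $\dim \mathbb{A}^1 = 1$. I would address this by re-establishing the subexponential bound on $|\langle P, H^i(\mathcal{Z}^{(d,\dots,d)}_1(\mathbb{A}^1))\rangle|$ directly in the present setting. The ordered space $\widetilde{\Rat}^*_d$ is the complement in $\mathbb{A}^{md}$ of the linear subspace arrangement
\[ \bigcup_{(i_1,\dots,i_m)\in [d]^m} \{x^1_{i_1} = \cdots = x^m_{i_m}\}, \]
whose flats have codimension $m-1$ in the ambient space. When $m \geq 3$ this codimension is at least $2$, and the shellability and Leray-spectral-sequence analysis of Farb-Wolfson-Wood, together with Gadish's estimates for complements of subspace arrangements, goes through verbatim to give the required bound. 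When $m = 2$ one instead has a hyperplane arrangement complement, whose cohomology ring is explicitly described by Orlik-Solomon, and the bound on the $P$-isotypic multiplicities can be obtained by a direct count of broken-circuit basis elements. I expect this $m=2$ case to be the main obstacle, since the general machinery is designed for codimension at least $2$ and a separate argument must be inserted.

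With convergence in hand, the theorem follows by the standard Grothendieck-Lefschetz argument. For each fixed $d$, the trace formula combined with the quotient relation $\widetilde{\Rat}^*_d / S_d^m = \Rat^*_d$ and the fact that $P(f)$ depends only on the Frobenius cycle types on the $m$ root-multisets of $f$ yields
\[ q^{-md} \sum_{f \in \Rat^*_d(\mathbb{P}^{m-1})(\mathbb{F}_q)} P(f) = \sum_i (-1)^i \tr\bigl(\Frob_q \mid \langle H^i(\widetilde{\Rat}^*_d(\mathbb{CP}^{m-1})),P\rangle\bigr), \]
after using étale-to-singular comparison to identify the Galois representations on the right. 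Passing to the limit $d \to \infty$ is then justified term-by-term by \thref{etale_stab} and uniformly by the convergence bound established above, so both sides converge and agree in the limit.
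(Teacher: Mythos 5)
Your structural plan is correct: Theorem 1.5 is indeed obtained by specializing Theorem 1.4 to $X = \mathbb{A}^1$, $\mathbf d = (d,\dots,d)$, $n=1$, and the paper presents it exactly as such a corollary without a separate argument.

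However, the detour you then take to ``repair'' the convergence hypothesis is based on a misreading and is not needed. The hypothesis ``$X$ a connected manifold of dimension at least $2$'' in Theorem~1.3 (and likewise in Theorems~1.1 and~3.2) refers to the real manifold dimension of the topological space, not to the algebraic dimension of a scheme. Here $X(\mathbb{C}) = \mathbb{C} \cong \mathbb{R}^2$ has manifold dimension $2$, so the hypothesis of Theorem~1.3 is already satisfied for $\mathbb{A}^1$; nothing needs to be re-established. The codimension $m-1$ you computed for the flats of the arrangement
\[
\bigcup_{(i_1,\dots,i_m)} \{x^1_{i_1} = \cdots = x^m_{i_m}\}
\]
is the codimension over $\mathbb{C}$; as real linear subspaces in $\mathbb{R}^{2md}$ these flats have codimension $2(m-1) \geq 2$, which is exactly the regime the Farb--Wolfson--Wood/Gadish machinery is designed for, including $m=2$. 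You have also reversed the role of the $m=2$ case: the paper explicitly singles it out as the \emph{more} tractable case, not an obstacle, because $\widetilde{\Rat}^*_d(\mathbb{CP}^1)$ is a hyperplane-arrangement complement and so the Frobenius eigenvalues on its \'etale cohomology are known to be $q^{-i}$ by \cite{BE}. So the proposed Orlik--Solomon/broken-circuit argument addresses a problem that doesn't exist. Once you drop that detour, what remains of your write-up is precisely the paper's argument: apply Theorem~1.4, which itself rests on Theorems~1.2, 1.3, and the Grothendieck--Lefschetz formula as in Theorem~5.1 (\thref{fig_stats}).
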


In general, the space \(\widetilde{\Rat}^*_d(\mathbb{CP}^{m-1})\) is the complement of a linear subspace arrangement, one that is not well-behaved enough for us to say what the eigenvalues of Frobenius acting on its \'etale cohomology are. However, in the case \(m = 2\) the space \(\widetilde{\Rat}^*_d(\mathbb{CP}^{1})\) is the complement of a hyperplane arrangement, and therefore by \cite{BE} the action of \(\Frob_q\) on \(H^i(\widetilde{\Rat}^*_d(\mathbb{CP}^{1}))\) is multiplication by \(q^{-i}\). So we obtain
\[ \lim_{d \to \infty} q^{-2 d} \sum_{f \in \Rat^*_d(\mathbb{P}^{1})(\mathbb{F}_q)} P(f) = \sum_{i=0}^\infty (-1)^i \langle H^i(\widetilde{\Rat}^*_d(\mathbb{CP}^{1})), P \rangle\, q^{-i} \]

As an example of Theorem 1.5, in the case \(P = 1\), we obtain
\[ \lim_{d \to \infty} q^{-m d} \left|\Rat^*_d(\mathbb{P}^m)(\mathbb{F}_q)\right| = \sum_{i=0}^\infty (-1)^i \tr\left(\Frob_q : H^i(\Rat^*_d(\mathbb{P}^{m-1})) \right)  \]
so that the number of such rational maps, as the degree goes to infinity, stabilizes to the series on the right. As another example, if \(P = X^i_1\), then \(X^i_1(f)\) counts the number (with multiplicity) of \(\mathbb{F}_q\)-rational intersection points of the image of \(f\) in \(\mathbb{P}^m\) with the hyperplane \(\{x_i = 0\}\). Thus,
\[ \lim_{d \to \infty} q^{-m d} \sum_{f \in \Rat^*_d(\mathbb{P}^m)(\mathbb{F}_q)} \#\{f^{-1}\{x_i = 0\}\} = \sum_{i=0}^\infty (-1)^i \tr\left(\Frob_q : \langle X^i_1, H^i(\widetilde{\Rat}^*_d(\mathbb{P}^{m-1}))\rangle \right)  \]
so that the \emph{average} number of intersection points, across all such rational maps, stabilizes as the degree goes to infinity to the series on the right.

\subsection*{Related work}
Most of what we prove in \S2 was independently proven by Li-Yu \cite{LY}. In particular, they obtain Theorem 2.5 and Theorem 2.6. Their results work in the greater generality of \(\FI^m\)-modules over arbitrary Noetherian rings. Li-Yu also probe deeply into the homological algebra of \(\FI^m\)-modules, and in particular study the ``\(\FI^m\)-homology'' of \(\FI^m\)-modules, following Church-Ellenberg's \cite{CE} theory of FI-homology, which we do not explore at all in this paper.

\section{\(\FI^m\)-modules and their properties}
FI is the category introduced by Church-Ellenberg-Farb \cite{CEF} whose objects are finite sets and whose morphisms are injections. Here, we consider the \(m\)-fold product category \(\FI^m\) for some fixed \(m\). Thus, \(\FI^m\) has as objects \(m\)-tuples of finite sets \((S_1, \dots, S_m)\), and morphisms \(f: S \to T\) given by tuples \((f_1, \dots, f_m)\) of injections \(f_i: S_i \hookrightarrow T_i\). This clearly has a skeleton with objects indexed by tuples \((c_1, \dots, c_m)\) of natural numbers, and morphisms \(\mathbf{c} \to \mathbf{d}\) given by tuples \((f_1, \dots, f_m)\) of injections \(f_i: [c_i] \hookrightarrow [d_i]\), where \([n] = \{1, \dots, n\}\). We can define a partial order on such \(\mathbf d\) by saying that \(\mathbf c \le \mathbf d\) if each \(c_i \le d_i\), and then \(\Hom(\mathbf c, \mathbf d) \ne \emptyset\) just when \(\mathbf c \le \mathbf d\). We see that \(\End(\mathbf d) = \Aut(\mathbf d) = S_{\mathbf d} = S_{d_1} \times \cdots \times S_{d_m}\).

An \(\FI^m\)-module is just a functor \(\FI^m \to k\Mod\), where \(k\) is some ring, which we will always take here to be a field of characteristic 0. If \(V\) is an \(\FI^m\)-module, then each \(V_{\mathbf d}\) is an \(S_{\mathbf d}\)-representation. \(\FI^m\)-modules form an abelian category, with maps given by natural transformations of functors. If \(V\) is an \(\FI^m\)-module and \(v_1, \dots v_m \in V\), the submodule generated by the \(v_i\)'s is the smallest submodule that contains them. \(V\) is finitely generated if it is generated by a finite subset. For any \(\mathbf c\), we define the ``free'' module \(M(\mathbf c) = k[\Hom_{\FI^m}(\mathbf c, -)]\). Thus for any \(\mathbf d \ge \mathbf c\), we have \(M(\mathbf c)_{\mathbf d} = k[\Hom_{\FI^m}(\mathbf c, \mathbf d)]\). Furthermore, if \(V\) is an \(\FI^m\)-module and \(v \in V_{\mathbf d}\), there is a natural map \(M(\mathbf{d}) \to V\) taking \(f\) to \(f_* v\), whose image is the submodule of \(V\) generated by \(v\). Thus, \(V\) is finitely generated just when there is a surjection \(\bigoplus_i M(\mathbf{d}_i) \twoheadrightarrow V\).

Gadish \cite{Ga2} proved the Noetherian property for \(\FI^m\)-modules:

\begin{thm}[{\cite[Prop 6.3]{Ga2}}]
If \(V\) is a finitely generated \(\FI^m\)-module, then any submodule is finitely generated.
\end{thm}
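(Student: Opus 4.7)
The plan is to proceed by induction on $m$, with the base case $m=1$ being the Noetherianity of FI-modules over a field of characteristic 0 established by Church-Ellenberg-Farb, which I take as given. (Over a Noetherian ring the analogous result of Sam-Snowden would be the input.)

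For the inductive step, I would exploit the canonical identification $\FI^m = \FI \times \FI^{m-1}$, which yields an equivalence between $\FI^m$-modules and $\FI$-modules valued in the abelian category $\FI^{m-1}\Mod$. Under this equivalence, free modules decompose as an external tensor product: $M(\mathbf c)_{\mathbf d} = k[\Hom_{\FI}(c_1, d_1)] \otimes_k M(c_2, \dots, c_m)_{(d_2, \dots, d_m)}$. First I would reduce to the case of a submodule of a single free module. Given a f.g.\ module $V$ with finite generating set, fix a surjection $\pi: F := \bigoplus_j M(\mathbf c_j) \twoheadrightarrow V$; any submodule $W \subseteq V$ pulls back to $\pi^{-1}(W) \subseteq F$, and finite generation of $\pi^{-1}(W)$ implies that of $W$. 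Taking a further direct sum reduces to the case $V = M(\mathbf c)$. Now, viewing $M(\mathbf c)$ as an $\FI$-module valued in $\FI^{m-1}\Mod$, its value $M(\mathbf c)(d_1)$ at each $d_1$ is a finite direct sum of copies of $M(c_2, \dots, c_m)$, hence a finitely generated $\FI^{m-1}$-module by the inductive hypothesis.

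The last step, which is where the real work lies, is to show that every sub-$\FI$-module of such a ``pointwise $\FI^{m-1}$-finitely-generated'' module is itself globally finitely generated. Pointwise finite generation alone is not sufficient, so one needs to combine the FI-module structure with the $\FI^{m-1}$-side. The cleanest formulation is via the Sam-Snowden framework of Gröbner categories: one shows that FI is Gröbner, and that the product of a Gröbner category with a Gröbner category is quasi-Gröbner, which by their general theorem implies Noetherianity of the product category's module category. Concretely, this requires fixing a well-ordering on the morphism sets of $\FI^m$ compatible with postcomposition, and checking that every ideal in each hom-poset has finitely many generators. An alternative, more hands-on route in characteristic 0 would be to mimic the original CEF argument: decompose each $M(\mathbf c)_{\mathbf d}$ into $S_{\mathbf d}$-irreducibles via an iterated Pieri rule, observe that the multiplicities are uniformly controlled, and deduce the ascending chain condition from representation stability for multiplicities.

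The main obstacle is this combinatorial core: verifying, one way or another, that the well-partial-order structure on $\FI$ lifts to a well-partial-order structure on $\FI^m$ strong enough to block infinite strictly-ascending chains of submodules. Once that combinatorial input is in place, the inductive structure outlined above propagates Noetherianity from $\FI^{m-1}$ to $\FI^m$ essentially formally.
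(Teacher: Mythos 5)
The paper does not reprove this statement; it is imported wholesale from Gadish \cite[Prop 6.3]{Ga2}, whose argument shows directly that $\FI^m$ belongs to a class of ``categories of FI type'' that are quasi-Gröbner in the sense of Sam-Snowden, and then invokes their general theorem that representation categories of quasi-Gröbner categories are locally Noetherian. Your sketch identifies exactly this machinery as the decisive input, so at the level of the essential idea you and Gadish agree.

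Where you diverge is in the framing. Gadish does not induct on $m$: he checks the Gröbner conditions for $\FI^m$ in one shot (using that finite products of Gröbner categories are Gröbner, and that a well-order on injections $[c]\hookrightarrow[d]$ lifts coordinatewise to a well-order on tuples). Your layered route---view $\FI^m$-modules as $\FI$-modules valued in $\FI^{m-1}\Mod$, reduce to submodules of a single $M(\mathbf c)$, observe pointwise finite generation over $\FI^{m-1}$, then invoke Noetherianity---is a genuinely different organization of the same content. It buys you a clean reduction to free modules and makes the role of the $m=1$ base case explicit, but at the cost of needing a slightly stronger form of the black box: you need Noetherianity for $\FI$-modules valued in a locally Noetherian abelian category, not merely $\FI$-modules over a ring. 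You are right to flag that ``pointwise finitely generated over $\FI^{m-1}$'' is not by itself enough, and right that the missing step is precisely the well-partial-order combinatorics on $\Hom_{\FI^m}$; once Sam-Snowden's product-of-Gröbner-categories result is taken as input, your inductive step does close. The alternative char-0 route you mention (iterated Pieri plus multiplicity control, in the spirit of CEF) is also viable and is essentially the path taken by Li-Yu, whom the paper credits with independently proving the downstream structural results.

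One small caution: your reduction ``taking a further direct sum reduces to the case $V=M(\mathbf c)$'' is stated a bit loosely---what you actually use is that Noetherianity is preserved under finite direct sums, which is true and standard, but worth saying. With that tidied and the Gröbner input cited precisely, the sketch would compile into a correct proof, modestly different in structure from the one the paper cites.
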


\subsection{Projective \(\FI^m\)-modules and \(\FI\sharp^m\)}
Let \(\Res: \FI^n \to S_{\mathbf d}\) be the restriction to a single group. This functor has a left adjoint \(\Ind^{\FI^n}: S_{\mathbf d} \to \FI^n\) given by
\[ \Ind^{\FI^m}(V)_{\mathbf{d} + \mathbf{c}} = \Ind_{S_{\mathbf d} \times S_{\mathbf c}}^{S_{\mathbf{d} + \mathbf{c}}} V \boxtimes k \]
Then \(\Ind^{\FI^n}(V)\) is a projective \(\FI^n\)-module, and tom Dieck \cite[Prop 11.18]{tomD} proved that any projective \(\FI^n\)-module is of this form (in fact, for a large class of category representations). Furthermore, any \(S_{\mathbf d}\)-representation \(V\) is a direct sum of external tensor products of \(S_{d_i}\)-representations, and we have
\begin{align}
\begin{split} 
\Ind^{\FI^m}(V_1 \boxtimes \cdots \boxtimes V_m)_{\mathbf{d} + \mathbf{c}} &= \Ind_{S_{d_1} \times \cdots \times S_{d_m} \times S_{c_1} \times \cdots \times S_{c_m}}^{S_{d_1 + c_1} \times \cdots \times S_{d_m + c_m}} (V_1 \boxtimes \cdots \boxtimes V_m) \boxtimes (k \boxtimes \cdots \boxtimes k) \\
&= (\Ind_{S_{d_1} \times S_{c_1}}^{S_{d_1 + c_1}} V_1 \boxtimes k) \boxtimes (\Ind_{S_{d_m} \times S_{c_m}}^{S_{d_m + c_m}} V_m \boxtimes k) \\
&= \left(\Ind^{\FI}(V_1) \boxtimes \cdots \boxtimes \Ind^{\FI}(V_m)\right)_{\mathbf{d} + \mathbf{c}}
\end{split}
\end{align}
so that any projective \(\FI^m\)-module is a direct sum of tensor product of projective \(\FI^m\)-modules. 

\cite{CEF} give a convenient way of determining when an FI-module is projective: they defined a category \(\FI\sharp\) in which FI embeds, such that \(\FI\sharp\)-modules are exactly the projective FI-modules. Thus a given FI-module is a direct sum of \(\Ind^{\FI}(W)\)'s just when it extends to an \(\FI\sharp\)-module. 

The same construction works for \(\FI^m\): we just take \(\FI\sharp^m\). One description of \(\FI\sharp^m\) is as the category whose objects are those of \(\FI^m\), and whose morphisms \(\mathbf{x} \to \mathbf{y}\) are given by pairs \((\mathbf{z}, f)\), where \(\mathbf{z} \subset \mathbf{x}\) and \(f: \mathbf{z} \hookrightarrow \mathbf{y}\). Thus, it is the category of ``partial morphisms'' of \(\FI^m\). We then have the following.

\begin{thm}
Every \(\FI\sharp^m\)-module is isomorphic to \(\bigoplus_{\mathbf d} \Ind^{\FI^m}(W_{\mathbf d})\) for some representations \(W_{\mathbf d}\) of \(S_{\mathbf d}\).
\end{thm}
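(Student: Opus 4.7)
The plan is to generalize the Church-Ellenberg-Farb argument for the $m=1$ case, exploiting that $\FI\sharp^m$ is literally the product category $(\FI\sharp)^m$. A morphism $\mathbf x \to \mathbf y$ in $\FI\sharp^m$ is an $m$-tuple of $\FI\sharp$-morphisms $x_i \to y_i$, so the CEF idempotent machinery from the $m=1$ case is available in each coordinate separately, and the coordinate-wise actions commute. I will construct the desired decomposition directly using idempotents in the category algebra.

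For any $\mathbf d$ and any $\mathbf S = (S_1, \dots, S_m)$ with $S_i \subseteq [d_i]$, the partial morphism consisting of the inclusion $\mathbf S \hookrightarrow \mathbf d$ with source part $\mathbf S$ is an idempotent $e_{\mathbf S} \in k[\End_{\FI\sharp^m}(\mathbf d)]$, and these idempotents multiply coordinate-wise via intersection. Following CEF, I define $\pi^i_{d_i}$ to be the idempotent in the $i$-th coordinate that cuts out the \emph{new} subspace, i.e., the part killed by every proper restriction in coordinate $i$. Because idempotents in different coordinates commute, $\pi_{\mathbf d} := \prod_i \pi^i_{d_i}$ is $S_{\mathbf d}$-invariant, and I set $W_{\mathbf d} := \pi_{\mathbf d}(V_{\mathbf d}) \subseteq V_{\mathbf d}$. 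The candidate isomorphism is then the natural map $\Phi: \bigoplus_{\mathbf d} \Ind^{\FI^m}(W_{\mathbf d}) \to V$ determined on each summand by the inclusion $W_{\mathbf d} \hookrightarrow V_{\mathbf d}$ together with the $\FI^m$-module structure of $V$ obtained by restriction from its $\FI\sharp^m$-structure. Checking that $\Phi$ is an isomorphism uses that any partial $\FI\sharp^m$-morphism factors, coordinate by coordinate, as $e_{\mathbf S}$ followed by an honest $\FI^m$-morphism; combined with identity (1) from the excerpt, which says $\Ind^{\FI^m}$ of an external tensor product equals the external tensor product of $\Ind^{\FI}$'s, this reduces the check to applying the CEF case in each coordinate.

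Alternatively, one can argue inductively: since $\FI\sharp^m = \FI\sharp \times \FI\sharp^{m-1}$, an $\FI\sharp^m$-module is a functor $\FI\sharp \to \FI\sharp^{m-1}\Mod$, and the $m=1$ CEF classification holds verbatim for functors valued in any cocomplete $k$-linear abelian category, giving $V = \bigoplus_{d_1} \Ind^{\FI}(U^{(d_1)})$ where each $U^{(d_1)}$ is an $S_{d_1}$-equivariant $\FI\sharp^{m-1}$-module; the inductive hypothesis applied to each $U^{(d_1)}$ then produces the claimed decomposition. I expect the main obstacle to be verifying carefully that the coordinate-wise CEF idempotent decomposition assembles into a genuine $\FI^m$-submodule decomposition, rather than merely a levelwise splitting of $S_{\mathbf d}$-representations. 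This amounts to checking naturality of the CEF idempotents under the remaining coordinates' partial morphisms, which follows because $\pi^i_{d_i}$ lives in the $i$-th-coordinate endomorphism algebra and so automatically commutes with any operation coming from the other coordinates.
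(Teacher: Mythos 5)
Your proposal is correct, and it takes a genuinely different route from the paper. The paper's proof is a two-line abstraction: it observes that CEF's structure theorem for $\FI\sharp$-modules can be read as saying the category $\FI\sharp\Mod$ is semisimple with simples $\Ind^{\FI}(W)$ for $W$ irreducible, then asserts that semisimplicity of $k$-linear functor categories passes to products of indexing categories (with simples given by external tensor products), and finally applies identity (1) to recognize $\Ind^{\FI}(W_1)\boxtimes\cdots\boxtimes\Ind^{\FI}(W_m)$ as $\Ind^{\FI^m}(W_1\boxtimes\cdots\boxtimes W_m)$. You instead build the decomposition explicitly by lifting CEF's idempotent machinery coordinate-by-coordinate: the partial-identity idempotents $e_{\mathbf S}$ in $k[\End_{\FI\sharp^m}(\mathbf d)]$ factor as products over coordinates, the ``new-part'' projectors $\pi^i_{d_i}$ in distinct coordinates commute, and setting $W_{\mathbf d}=\pi_{\mathbf d}(V_{\mathbf d})$ with $\pi_{\mathbf d}=\prod_i\pi^i_{d_i}$ gives an $S_{\mathbf d}$-subrepresentation whose induced modules exhaust $V$. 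Your concern about the coordinate-wise splitting assembling into a genuine $\FI^m$-submodule decomposition is resolved exactly as you say: $\pi^i_{d_i}$ lies in the $i$-th coordinate's endomorphism algebra, so it commutes with all morphisms supported in the other coordinates, which is the needed naturality. The trade-off is clear: the paper's semisimplicity argument is terser but silently relies on a standard fact about products of semisimple module categories that it does not prove; your construction is longer but self-contained, exhibits the summands $W_{\mathbf d}$ explicitly, and avoids any appeal to abstract nonsense. One caveat on your alternative inductive route: the claim that the $m=1$ CEF classification holds ``verbatim'' for functors valued in an arbitrary cocomplete $k$-linear abelian category is plausible (the CEF argument only uses the idempotent decomposition of $k[\End_{\FI\sharp}(n)]$ and averaging over $S_n$), but it is not stated in \cite{CEF} and would need to be checked by rerunning their proof in that generality; the direct idempotent approach sidesteps this.
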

\begin{proof}

\cite{CEF} prove that \(\FI\sharp\)-modules are precisely the direct sums of \(\Ind^{\FI}(W)\)'s, and thus that \(\FI\sharp\Mod\) is semisimple. This implies that \((\FI\sharp \times \FI\sharp)\Mod\) is semisimple, and by induction that \(\FI\sharp^m\Mod\) is semisimple, and that its simples are just external tensor products of the simples of \(\FI\sharp\Mod\), which as we said are just the \(\Ind^{\FI}(W)\). The claim follows by (1).

\end{proof}

So \(\FI\sharp^m\)-modules are always sums of tensor products of \(\FI\sharp\)-modules. Note, however, that this is not true for general \(\FI^m\)-modules. Indeed, take the \(\FI^2\)-module \(M\) with
\[ M_{a,b} = \begin{cases} 0 & \text{if } (a,b) \in \{ (0,0), (1,0), (0,1) \} \\ k & \text{otherwise} \end{cases} \]
and where all morphisms of \(\FI^2\) induce the identity \(k \to k\) or the unique map \(0 \to k\). Then \(M\) is not the direct sum of external tensor products of FI-modules. 

First, we know \(M\) cannot be decomposed as the direct sum of two nonzero \(\FI^2\)-modules. Indeed, if we could write \(M = V \oplus V'\), then without loss of generality there would be nonzero elements \(v \in V_{\mathbf c}\), \(v' \in V'_{\mathbf d}\) with \(\mathbf c \le \mathbf d\), and thus no morphism of \(\FI^2\) could have \(f^* v\) be a nonzero multiple of \(v'\). But for any nonzero \(v \in M_{\mathbf c}, v' \in M_{\mathbf d}\) with \(\mathbf c \le \mathbf d\), we have that \(f^* v\) is always a nonzero multiple of \(v'\) for any \(f \in \Hom_{\FI^2}(\mathbf c, \mathbf d)\).

But then if we had \(M = V \boxtimes W\), we would have \(V_1 \boxtimes W_0 = 0, V_0 \boxtimes W_1 = 0, V_1 \boxtimes W_1 = k\), which is impossible.

Finally, (1) lets us compute the character of an \(\FI\sharp^m\)-module. First, define a \emph{character polynomial for \(\FI^m\)} to be a polynomial in \(k[X_1^{(1)}, \dots X_1^{(m)}, X_2^{(1)}, \dots]\), where \(X_i^{(k)}\) is the class function on \(S_{\mathbf d}\) that counts the number of \(i\)-cycles in \(S_{d_k}\). We then have the following.

\begin{prop}
If \(V\) is a finitely generated \(\FI\sharp^m\)-module, then \(\chi_{V_n}\) is given by a single character polynomial \(P\) for all \(n\).
\end{prop}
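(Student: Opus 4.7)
The plan is to combine the structure theorem of the previous \thref{etale_stab}... I mean Theorem 2.2 with the known single-factor result of Church--Ellenberg--Farb.

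First I would apply Theorem 2.2 to write $V \cong \bigoplus_{\mathbf d} \Ind^{\FI^m}(W_{\mathbf d})$. Finite generation forces this sum to involve only finitely many nonzero $W_{\mathbf d}$, each of them finite-dimensional: the generators of $V$ live in finitely many degrees $\mathbf c_1, \ldots, \mathbf c_N$, and since $\Ind^{\FI^m}(W_{\mathbf d})_{\mathbf c} = 0$ unless $\mathbf d \le \mathbf c$, only summands with $\mathbf d \le \mathbf c_j$ for some $j$ can be nonzero. (If one wants a cleaner argument, Gadish's Noetherian property \cite{Ga2} applies directly to reach the same conclusion.) Since characters are additive on direct sums, it suffices to prove the proposition for a single summand $\Ind^{\FI^m}(W)$.

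Next I would decompose $W$ as a direct sum of external tensor products $W_1 \boxtimes \cdots \boxtimes W_m$ of $S_{d_k}$-representations, and invoke the identity (1) from the text to rewrite
\[ \Ind^{\FI^m}(W_1 \boxtimes \cdots \boxtimes W_m) \cong \Ind^{\FI}(W_1) \boxtimes \cdots \boxtimes \Ind^{\FI}(W_m). \]
Because characters of external tensor products multiply, this gives
\[ \chi_{\Ind^{\FI^m}(W_1 \boxtimes \cdots \boxtimes W_m)}(\sigma_1, \ldots, \sigma_m) = \prod_{k=1}^m \chi_{\Ind^{\FI}(W_k)}(\sigma_k), \]
where $\sigma_k \in S_{d_k}$.

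Finally I would quote the corresponding single-factor fact from \cite{CEF}: for each $k$, there is a character polynomial $P_k \in \mathbb{Q}[X_1, X_2, \ldots]$ such that $\chi_{\Ind^{\FI}(W_k)}(\sigma_k) = P_k(X_1(\sigma_k), X_2(\sigma_k), \ldots)$ for all $\sigma_k$. Substituting $X_i \mapsto X_i^{(k)}$ in each $P_k$ and taking the product yields a single polynomial in $\mathbb{Q}[\{X_i^{(k)}\}]$ that computes the character of $\Ind^{\FI^m}(W_1 \boxtimes \cdots \boxtimes W_m)$ on every $S_{\mathbf d}$. Summing over the finitely many summands produces the desired character polynomial for $V$.

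The only non-formal ingredient is the single-factor result of \cite{CEF}, so there is no real obstacle here; the proposition is essentially a bookkeeping consequence of Theorem 2.2, the multiplicativity of external tensor characters, and the FI case. One sanity check I would perform is to confirm that the bound on the degree of the resulting polynomial grows only with $|\mathbf d|$, which matches the known degree bound in the FI setting and will be useful for the stability statements in later sections.
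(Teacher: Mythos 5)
Your proposal is correct and follows essentially the same route as the paper: decompose via Theorem 2.2 into a direct sum of $\Ind^{\FI^m}$'s, split each into an external tensor product using identity (1), and invoke the single-factor character polynomial result of Church--Ellenberg--Farb (the paper cites \cite[Thm 4.1.7]{CEF}), then multiply and sum. The only difference is that you make explicit the finiteness argument (only finitely many summands can be nonzero, since $\Ind^{\FI^m}(W_{\mathbf d})$ vanishes in degrees $\mathbf c$ with $\mathbf d \not\le \mathbf c$), which the paper leaves implicit; this is a genuine gap-fill, not a different approach.
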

\begin{proof}
If \(V = \Ind^{\FI^m}(V_1 \boxtimes \cdots \boxtimes V_m)\), then by (1), \(V = \Ind^{\FI}(V_1) \boxtimes \cdots \boxtimes \Ind^{\FI}(V_m)\). By \cite[Thm 4.1.7]{CEF}, the character of \(\Ind^{\FI}(V_i)_n\) is given by a single character polynomial \(P_i \in k[X^{(i)}_1, X^{(i)}_2, \dots]\) for all \(m\). Then \(\chi_V\) = \(P_1 \cdots P_m\). By Theorem 2.2, a general \(\FI\sharp^m\)-module is a direct sum of such \(V\)'s, so the claim follows.
\end{proof}

\subsection{Shift functors and representation stability}
Another basic operation on \(\FI^m\)-modules are the shift functors. For \(\mathbf a \in \FI^m\), let
\[S_{+\mathbf{a}} : \FI^m\Mod \to \FI^m\Mod\] be the functor defined by \(S_{+\mathbf{a}}(V)_{\mathbf{d}} = V_{\mathbf{d} + \mathbf{a}}\). Following \cite{CEFN} and \cite{Na}, we will use this functor to establish representation stability for \(\FI^m\).

Notice that we have \(S_{+\mathbf{a}} \circ S_{+\mathbf{b}} = S_{+\mathbf{b}} \circ S_{+\mathbf{a}} = S_{+(\mathbf{a} + \mathbf{b})}\). In particular, if we decompose \(\mathbf{a}\) into ``unit vectors'' as \(\mathbf{a} = a_1 \mathbf{e}_1 + \cdots + a_n \mathbf{e}_n\), where \(\mathbf{e}_i = (0,\dots, 1_i, \dots, 0)\), then \(S_{\mathbf{a}} = (S_{+\mathbf{e}_n})^{a_n} \circ \cdots \circ (S_{+\mathbf{e}_1})^{a_1} \). The following fundamental proposition describes the effect of shift functors on the ``free'' modules \(M(\mathbf{d})\), generalizing \cite[Prop 2.12]{CEFN}:

\begin{prop}
For any \(\mathbf{a}, \mathbf{d} \in \FI^m\), there is a natural decomposition
\[ S_{+\mathbf{a}}M(\mathbf{d}) = M(\mathbf{d}) \oplus Q_a \]
where \(Q_a\) is a free \(\FI^m\)-module generated in degree \(\le d - 1\).
\end{prop}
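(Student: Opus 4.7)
The plan is to mimic componentwise the proof of \cite[Prop 2.12]{CEFN}, splitting the basis of
\[ S_{+\mathbf{a}}M(\mathbf{d})_{\mathbf{c}} \;=\; k[\Hom_{\FI^m}(\mathbf{d},\, \mathbf{c}+\mathbf{a})] \]
by recording which input coordinates land in the ``new'' part of each target. Concretely, for an injection $f = (f_1,\dots,f_m) : \mathbf{d} \hookrightarrow \mathbf{c}+\mathbf{a}$, define
\[ T_i(f) := f_i^{-1}\bigl([c_i+1,\, c_i+a_i]\bigr) \subseteq [d_i], \qquad \mathbf{t}(f) := (|T_1(f)|,\dots,|T_m(f)|), \]
and partition the basis by the value of $\mathbf{T}(f) = (T_1(f),\dots,T_m(f))$. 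This yields a levelwise $k$-module decomposition
\[ S_{+\mathbf{a}}M(\mathbf{d})_{\mathbf{c}} \;=\; \bigoplus_{\mathbf{T}} k\bigl[\{ f : \mathbf{T}(f) = \mathbf{T}\}\bigr]. \]

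Second, I would identify each summand. Specifying $f$ with $\mathbf{T}(f) = \mathbf{T}$ is equivalent to giving, for each $i$, an injection $T_i \hookrightarrow [a_i]$ (of which there is a constant number, independent of $\mathbf{c}$) together with an injection $[d_i]\setminus T_i \hookrightarrow [c_i]$. Identifying $[d_i]\setminus T_i$ with $[d_i - t_i]$ in the skeleton, the $\mathbf{T}$-summand is naturally isomorphic to $k[\Hom_{\FI^m}(\mathbf{T}, \mathbf{a})] \otimes M(\mathbf{d}-\mathbf{t})_{\mathbf{c}}$. The $\mathbf{T} = \mathbf{0}$ piece is exactly $M(\mathbf{d})_{\mathbf{c}}$, while for every nonzero $\mathbf{T}$ we get a finite-multiplicity copy of $M(\mathbf{d}-\mathbf{t})$ with $|\mathbf{t}|\ge 1$, hence generated in a degree strictly less than $\mathbf{d}$. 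Assembling these summands yields the candidate decomposition
\[ S_{+\mathbf{a}}M(\mathbf{d}) \;\cong\; M(\mathbf{d}) \,\oplus\, \bigoplus_{\mathbf{T} \ne \mathbf{0}} k[\Hom_{\FI^m}(\mathbf{T}, \mathbf{a})] \otimes M(\mathbf{d}-\mathbf{t}). \]

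The last step is to promote this levelwise splitting to an $\FI^m$-module splitting by checking naturality in $\mathbf{c}$. A morphism $g : \mathbf{c} \hookrightarrow \mathbf{c}'$ acts on $S_{+\mathbf{a}}M(\mathbf{d})$ by postcomposition with $\bar g : \mathbf{c}+\mathbf{a} \hookrightarrow \mathbf{c}'+\mathbf{a}$, given by $g$ on the first $\mathbf{c}$ coordinates and by the canonical bijection $[c_i+1,c_i+a_i] \to [c'_i+1, c'_i+a_i]$ on each shifted block. Since $\bar g$ maps the shifted block into itself (and sends nothing else into it), the statistic $T_i(f)$ is preserved under postcomposition; hence the partition by $\mathbf{T}$ is respected, and on each summand the induced map is $\id \otimes (\text{structure map of } M(\mathbf{d}-\mathbf{t}))$. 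The main obstacle is purely bookkeeping---untangling the skeletal identifications so that the multi-index combinatorics line up and verifying that the ``new block'' convention is stable under all arrows of $\FI^m$. Once that is settled, the argument is essentially $m$ independent copies of the $\FI$ statement, glued via the external-tensor identity~(1).
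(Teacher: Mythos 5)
Your proof is correct and takes essentially the same approach as the paper: both arguments partition the $k$-basis of $S_{+\mathbf{a}}M(\mathbf{d})_{\mathbf{c}}$ by recording which input coordinates land in the ``new'' block of the shifted target, observe that this statistic is preserved by postcomposition with $\FI^m$-morphisms, and identify the resulting summands. The only difference is that the paper first reduces to the unit shift $\mathbf{a}=\mathbf{e}_i$ (so the relevant statistic is just $f_i^{-1}(\star)$, giving $d_i+1$ subsets) and then iterates, whereas you handle a general $\mathbf{a}$ in one step by tracking the full tuple $\mathbf{T}(f)$, with the multiplicity $k[\Hom_{\FI^m}(\mathbf{T},\mathbf{a})]$ accounting for the many ways to embed into the new block. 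Both routes are valid; yours trades the reduction step for slightly heavier bookkeeping in the identification of summands, and also yields the explicit multiplicities in $Q_{\mathbf a}$ directly.
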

\begin{proof}
It is enough to prove this for the case \(\mathbf{a} = \mathbf{e}_i\), since for a general \(\mathbf{a}\), we know that \(S_{+\mathbf{a}}\) is a composition of the \(S_{+\mathbf{e}_i}\)'s. A basis for \(S_{+\mathbf{e}_i} M(\mathbf{d})_{\mathbf{c}}\) is the set of tuples of injections \(\mathbf{f}\), where
\begin{gather*}
f_1: [d_1] \hookrightarrow [c_1] \\
\vdots \\
f_i: [d_i] \hookrightarrow [c_1] \sqcup \{\star\} \\
\vdots \\
f_m: [d_m] \hookrightarrow [c_m]
\end{gather*}
This set can be partitioned into \(d_i + 1\) subsets, according to \(f_i^{-1}(\star)\)---that is, by which element of \([d_i]\) (or possibly none) gets mapped by \(f_i\) to \(\star\). Notice that \(f_i^{-1}(\star)\) is not affected by post-composing with an \(\FI\)-morphism. Thus this partition actually defines a decomposition of \(S_{+\mathbf{e}_i} M(\mathbf{d})\) as a direct sum of \(\FI^m\)-modules.

For \(T \subset [d_i]\) of size at most 1, let \(M^T\) be the submodule of \(S_{+\mathbf{e}_i} M(\mathbf{d})\) spanned by those \(\mathbf{f}\) with \(f_i^{-1}(\star) = T\). These \(\mathbf{f}\) are distinguished by the restrictions \(f|_{\mathbf{d} - T}\), and we have \((g_* f)|_{\mathbf{d} - T} = g \circ f|_{\mathbf{d} - T}\). We therefore have \(M^\emptyset \cong M(\mathbf d)\), and 
\[M^{\{t\}} \cong M(\mathbf d - \mathbf{e}_i) = M(d_1, \dots, d_i - 1, \dots, d_m).\]
So we have a decomposition
\[ S_{+\mathbf{e}_i} M(\mathbf{d}) = M^\emptyset \oplus \bigoplus_{t \in [d_i]} M^{\{t\}} = M(\mathbf{d}) \oplus \bigoplus_{t \in [d_i]} M(\mathbf{d} - \mathbf{e}_i). \]
\end{proof}

Following \cite{Na}, say that a finitely generated \(\FI^m\)-module \(V\) is \emph{filtered} if it admits a surjection
\[ \Pi: \bigoplus_{i=1}^g M(\mathbf{d}_i) \twoheadrightarrow V\]
such that the filtration \(0 = V^0 \subset V^1 \subset \dots \subset V^g = V\) given by
\[ V^r := \Pi\left( \bigoplus_{i=1}^r M(\mathbf{d}_i)\right),\: 0 \le r \le d \]
has successive quotients \(V^r/V^{r-1}\) which are projective \(\FI^m\)-modules.

\begin{thm} \thlabel{shift_filtered}
For any finitely generated \(\FI^m\)-module \(V\), there is some \(\mathbf a \in FI^m\) such that \(S_{+\mathbf a} V\) is filtered. 

Furthermore, there are filtered \(\FI^m\)-modules \(J^0, \dots, J^N\) and a sequence
\[ 0 \to V \to J^0 \to \cdots \to J^N \to 0 \]
which is exact in high enough degree. That is, the sequence
\[ 0 \to V_{\mathbf{d}} \to J^0_{\mathbf{d}} \to \cdots \to J^N_{\mathbf{d}} \to 0 \]
is exact for sufficiently large \(\mathbf{d}\).
\end{thm}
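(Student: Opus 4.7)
The plan is to adapt the shift-and-filter machinery of \cite{CEFN} and \cite{Na} from $\FI$ to $\FI^m$, using Gadish's Noetherian property (Theorem 2.1) together with Proposition 2.4 as the key inputs.

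For the first assertion, I would argue by induction on a complexity measure of $V$, such as the lexicographic pair (top generation multi-degree, number of generators in that multi-degree). The base case of free modules is immediate from Proposition 2.4: iterating over coordinates of $\mathbf{a} = a_1 \mathbf{e}_1 + \cdots + a_m \mathbf{e}_m$ shows $S_{+\mathbf{a}} M(\mathbf{d})$ is a direct sum of $M(\mathbf{c})$'s with $\mathbf{c} \le \mathbf{d}$, so free modules are tautologically filtered. For the inductive step, pick a minimal surjection $\Pi \colon \bigoplus_{i=1}^g M(\mathbf{d}_i) \twoheadrightarrow V$; by Theorem 2.1 the kernel $K$ is finitely generated. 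After applying $S_{+\mathbf{a}}$, Proposition 2.4 produces extra free summands of $S_{+\mathbf{a}} P$ of strictly lower multi-degree, which one uses to absorb the relations coming from $K$ without introducing new top-degree generators, yielding a filtration on $S_{+\mathbf{a}} V$ whose top quotient is free and whose remainder is filtered by the inductive hypothesis.

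For the co-resolution, I would follow the iterative embedding construction of \cite{Na}. Choose $\mathbf{a}_0$ from the first part so that $J^0 := S_{+\mathbf{a}_0} V$ is filtered, and let $\iota \colon V \to J^0$ be the natural morphism induced by the coordinate inclusions $[d_i] \hookrightarrow [d_i + (\mathbf{a}_0)_i]$. After enlarging $\mathbf{a}_0$ if necessary, $\iota$ becomes injective in sufficiently high degree because the torsion submodule of $V$ is finitely generated (Theorem 2.1) and hence concentrated in bounded degree. Set $V^{(1)} := \coker(\iota)$, apply the first part to $V^{(1)}$, and repeat; each $V^{(k)}$ is finitely generated, and splicing the resulting short exact sequences $0 \to V^{(k)} \to J^k \to V^{(k+1)} \to 0$ (exact in high degree) yields an exact complex $0 \to V \to J^0 \to J^1 \to \cdots$.

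The main technical obstacle will be proving that this iteration terminates, i.e.\ that $V^{(N+1)}$ vanishes in sufficiently high degree for some finite $N$. In the $\FI$-case this is controlled by the fact that $\coker(V \to S_{+a} V)$ is generated in strictly smaller degree than $V$ once $a$ is large enough. The multi-graded version requires a finer invariant: the natural candidates are an aggregate regularity like the sum of the top generation degrees, or a vector-valued generation degree equipped with a well-ordering. Proving strict descent of such an invariant under the cokernel operation—and thus bounding $N$—is the heart of the argument, and matches the work done in the parallel approach of Li--Yu \cite{LY}.
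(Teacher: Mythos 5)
Your high-level strategy matches the paper's: both follow the Nagpal--Ramos shift-and-filter machinery, using the decomposition of \(S_{+\mathbf{a}}M(\mathbf{d})\) from Proposition 2.4 together with Noetherianity, and both build the coresolution by iterating the short exact sequence \(0 \to V \to S_{+\mathbf{a}}V \to Q \to 0\). But there are two places where your sketch has genuine gaps that the paper's proof actually resolves.

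First, the inductive step of part one. You assert that the extra free summands produced by Prop.\ 2.4 ``absorb the relations coming from \(K\) without introducing new top-degree generators, yielding a filtration on \(S_{+\mathbf{a}}V\) whose top quotient is free.'' This is the crux, and it is not a proof. In fact the top quotient is generally not free; the correct claim is that it becomes \emph{projective}, i.e.\ of the form \(\Ind^{\FI^m}(W)\). The mechanism in the paper is specific: consider the composite \(\phi^{\mathbf{a}} \colon M(\mathbf{d}_i) \hookrightarrow S_{+\mathbf{a}}\widetilde V \twoheadrightarrow S_{+\mathbf{a}}V \twoheadrightarrow A^{\mathbf{a}}\) onto the top filtration quotient, and track the kernel \(K^{\mathbf{a}} = \ker(\phi^{\mathbf{a}})\). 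The \(S_{\mathbf{d}_i}\)-modules \(K^{\mathbf{a}}_{\mathbf{d}_i}\) are increasing in \(\mathbf{a}\) and therefore stabilize, and one can take \(\mathbf{a}\) to be the relation degree \(\mathbf{r}\). Once stabilized, \(K^{\mathbf{a}}\) is generated in degree \(\le \mathbf{d}_i\), and then \emph{exactness of} \(\Ind^{\FI^m}\) forces \(A^{\mathbf{a}} = \Ind^{\FI^m}(W)\). Without this stabilization argument, the inductive step does not go through: you have not explained why the relations from \(K\), after shifting, land entirely inside the lower-degree part of the filtration.

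Second, termination of the coresolution. You flag this as ``the main technical obstacle'' and ``the heart of the argument'' but leave it open, speculating about aggregate regularity or a well-ordering on vector degrees. The paper resolves it directly: the short exact sequence \(0 \to V \to S_{+\mathbf{a}}V \to Q \to 0\) has \(Q\) generated in strictly smaller degree than \(V\), because \(Q\) is a quotient of \(\widetilde{Q}\) (the complementary summand in \(S_{+\mathbf{a}}\widetilde V = \widetilde V \oplus \widetilde Q\)), which is free and generated in degree \(< \mathbf{D}\). Inducting on generation degree then gives the coresolution \(0 \to V \to S_{+\mathbf{a}}V \to K^0 \to \cdots \to K^M \to 0\) by splicing the coresolution of \(Q\). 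A proof that merely identifies termination as an open problem has not proved the theorem; you need the explicit descent of generation degree. Your observation about the torsion submodule controlling failure of injectivity of \(V \to S_{+\mathbf{a}}V\) in low degree is a reasonable point (and consistent with ``exact in high enough degree''), but it does not substitute for the termination argument.
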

Our proof follows the one given by Nagpal \cite[Thm A]{Na} and Ramos \cite[Thm 3.1]{Ra} for the case \(m = 1\). As we mentioned in the introduction, \thref{shift_filtered} was independently proven by Li-Yu \cite[Thm 1.5, Thm 4.10]{LY}.
\begin{proof}
Let \(V\) be an \(\FI^m\)-module generated in degree \(\mathbf{D}\) and related in degree \(\mathbf{r}\). This means there is an exact sequence
\[ 0 \to K \to \bigoplus_{i=1}^g M(\mathbf{d}_i) \to V \to 0 \]
where each \(\mathbf{d}_i \le \mathbf{D}\). We put \(\widetilde{V} := \bigoplus_{i=1}^g M(\mathbf{d}_i) \). Notice that by Proposition 2.3, we may write
\[ S_{+\mathbf{a}} \widetilde{V} = \bigoplus_{i=1}^g M(\mathbf{d}_i) \oplus Q_i^{\mathbf{a}}\]
where each \(Q_i^{\mathbf{a}}\) is free and generated in degree \(< \mathbf{d}_i\). We have the following commutative diagram with exact rows and columns:
\[ \begin{diagram}
0 & \rTo & \widetilde{U}^{\mathbf{a}} & \rTo & S_{+\mathbf{a}} \widetilde{V} & \rTo & M(\mathbf{d}_i) & \rTo & 0 \\
&& \dTo^{\Pi^{\mathbf{a}}} && \dTo^{S_{+\mathbf{a}} \Pi} && \dTo^{\phi^{\mathbf{a}}} && \\
0 & \rTo & U^{\mathbf{a}} & \rTo & S_{+\mathbf{a}} V & \rTo & A^{\mathbf{a}} & \rTo & 0 \\
&& \dTo && \dTo && \dTo && \\
&& 0 && 0 && 0 && \\
\end{diagram} \]
Looking at the \(\FI^m\)-module \(K^{\mathbf{a}} = \ker(\phi^{\mathbf{a}})\), we observe that the \(S_{\mathbf{d}_i}\)-modules \(K^{\mathbf{a}}_{\mathbf{d}_i}\) are increasing in \(\mathbf{a}\), and therefore must stabilize for large \(\mathbf{a}\). In fact, we can take \(\mathbf{a} = \mathbf{r}\).

Fixing such an \(\mathbf{a}\), it must be the case that \(K^{\mathbf{a}}\) is generated in degree \(\le \mathbf{d}_i\). By exactness of \(\Ind^{\FI^m}\), it follows that \(A^{\mathbf{a}} = \Ind^{\FI^m}(W)\) for some \(S_{\mathbf{d}_i}\)-module W. So we are left with the exact sequence
\[ 0 \to U^{\mathbf{a}} \to S_{+\mathbf{a}} V \to \Ind^{\FI^m}(W) \to 0 \]
By induction on degree, \(S_{+\mathbf{b}} U^{\mathbf{a}}\) is filtered for sufficiently large \(\mathbf{b}\). Since shifting is exact, we obtain
\[ 0 \to S_{+\mathbf{b}} U^{\mathbf{a}} \to S_{+(\mathbf{a}+\mathbf{b})} V \to S_{+\mathbf{b}} \Ind^{\FI^m}(W) \to 0 \]
We conclude that \(S_{+(\mathbf{a}+\mathbf{b})} V\) must be filtered. This completes the first part of the theorem.

For the second part, let \(\mathbf{a}\) be large enough so that \(S_{+\mathbf{a}} V\) is filtered. Continuing the notation of the first part, we have
\[ S_{+\mathbf{a}} \widetilde{V} = \bigoplus_i M(\mathbf{d}_i) \oplus \widetilde{Q}\]
where \(\widetilde{Q}\) is generated in degree \(< D\). We thus have an exact sequence
\[ 0 \to V \to S_{+\mathbf{a}} V \to Q \to 0\]
where \(Q\) is generated in degree \(< D\). By induction, the claim is true for \(Q\), say with filtered modules \(K^0, \dots, K^M\). If we form the sequence
\[ 0 \to V \to S_{+\mathbf{a}} V \to K^0 \to \cdots K^M \to 0\]
the claim then follows.
\end{proof}
Recall that the irreducible representations of \(S_{\mathbf{d}}\) are just given by tensor products of irreducible representations of each \(S_{d_i}\), which are indexed by partitions of \(d_i\). If \(\boldsymbol{\lambda} = (\lambda_1, \dots, \lambda_m)\) is a list of partitions of \(\mathbf{d} = (d_1, \dots, d_m)\), then write \(\Irr(\boldsymbol{\lambda}) = \Irr(\lambda_1) \boxtimes \cdots \boxtimes \Irr(\lambda_m)\) for the irreducible representation indexed by \(\boldsymbol \lambda\). Extend \(\boldsymbol{\lambda}\) to \(\mathbf{c} \ge \mathbf{d} + (\lambda_1^{(1)}, \dots, \lambda_m^{(1)})\) as follows:
\[ \boldsymbol{\lambda}[\mathbf{c}] = ((c_1 - |\lambda_1|, \lambda_1), \dots, (c_m - |\lambda_m|, \lambda_m)) \]
Then we obtain the following.
\begin{thm}[\bfseries Representation stability for \(\FI^m\)]
Let \(V\) be a finitely-generated \(\FI^m\)-module. Then there is a character polynomial \(P\)  such that for all \(\mathbf{d} \gg 0\), the character \(\chi_{V_{\mathbf{d}}} = P_{\mathbf d}\). In particular, the dimension \(\dim V_{\mathbf{d}}\) is eventually given by a polynomial in the \(d_i\)'s. Furthermore, the decomposition into irreducibles has multiplicities independent of \(\mathbf{d}\) for \(\mathbf{d}\) large:
\[ V_{\mathbf{d}} = \bigoplus_{\boldsymbol{\lambda}} \Irr(\boldsymbol{\lambda[\mathbf{d}]})^{c_{\boldsymbol \lambda}} \; \text{for all } \mathbf{d} \gg 0 \]
\end{thm}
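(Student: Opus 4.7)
The plan is to reduce the character-polynomial statement to the $\FI\sharp^m$ case handled by Proposition 2.3, using \thref{shift_filtered} to pass from arbitrary finitely generated $\FI^m$-modules to filtered ones and then to projective ones. If $V$ is projective, it extends to an $\FI\sharp^m$-module, so Proposition 2.3 applies directly. If $V$ is filtered with filtration $0 = V^0 \subset V^1 \subset \cdots \subset V^g = V$ having projective successive quotients, then additivity of characters in short exact sequences gives
\[
\chi_{V_{\mathbf{d}}} \;=\; \sum_{r=1}^{g} \chi_{(V^r/V^{r-1})_{\mathbf{d}}},
\]
a finite sum of character polynomials, hence itself a character polynomial.

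For arbitrary finitely generated $V$, \thref{shift_filtered} supplies a sequence $0 \to V \to J^0 \to \cdots \to J^N \to 0$ with each $J^i$ filtered, exact for $\mathbf{d} \gg 0$. Taking the alternating sum of characters yields, for $\mathbf{d} \gg 0$,
\[
\chi_{V_{\mathbf{d}}} \;=\; \sum_{i=0}^N (-1)^i\, \chi_{J^i_{\mathbf{d}}} \;=\; \sum_{i=0}^N (-1)^i\, P_i,
\]
where each $P_i$ is the character polynomial for $J^i$ from the previous step. The combination $P := \sum_i (-1)^i P_i$ is the desired character polynomial. Evaluating $P$ at the identity (so $X_1^{(k)} = d_k$ and $X_j^{(k)} = 0$ for $j > 1$) then recovers $\dim V_{\mathbf{d}}$ as a polynomial $Q(d_1, \dots, d_m)$ for all $\mathbf{d}$ large.

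For the multiplicity statement, I compute
\[
c_{\boldsymbol{\lambda},\mathbf{d}} \;=\; \langle \chi_{V_{\mathbf{d}}},\, \chi_{\Irr(\boldsymbol{\lambda}[\mathbf{d}])}\rangle_{S_{\mathbf{d}}}.
\]
Since $S_{\mathbf{d}} = \prod_k S_{d_k}$, the character $\chi_{\Irr(\boldsymbol{\lambda}[\mathbf{d}])} = \prod_k \chi_{\Irr(\lambda_k[d_k])}$ factors, and every monomial in the $X_j^{(k)}$ likewise factors across $k$. By linearity it suffices to treat $P = \prod_k Q_k$ with $Q_k$ a polynomial in the variables $X_j^{(k)}$ only; Fubini over the product group then yields
\[
\langle P,\, \chi_{\Irr(\boldsymbol{\lambda}[\mathbf{d}])}\rangle_{S_{\mathbf{d}}} \;=\; \prod_{k=1}^{m} \langle Q_k,\, \chi_{\Irr(\lambda_k[d_k])}\rangle_{S_{d_k}}.
\]
Each single-factor inner product is eventually constant in $d_k$ by the $\FI$ case proven in \cite{CEF}, so the product stabilizes in $\mathbf{d}$. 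Moreover each such factor vanishes whenever $|\lambda_k|$ exceeds the weighted degree of $Q_k$, so only finitely many $\boldsymbol{\lambda}$ contribute, and the decomposition takes the stated form.

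The main obstacle I anticipate is arranging the multiplicity computation uniformly: individual inner products stabilize, but one needs a single $\mathbf{d}_0$ past which all relevant $c_{\boldsymbol{\lambda},\mathbf{d}}$ simultaneously stabilize. This uniformity is automatic once we know, from finite generation together with Pieri's rule applied coordinate-wise to the generators $M(\mathbf{d}_i)$, that the set of $\boldsymbol{\lambda}$ with $c_{\boldsymbol{\lambda}} \neq 0$ is finite; so the real work is done by \thref{shift_filtered}, and the rest is bookkeeping.
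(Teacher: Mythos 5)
Your argument is correct and follows essentially the same route as the paper: reduce via \thref{shift_filtered} to filtered modules and then to projective (\(\FI\sharp^m\)) ones by additivity of characters along the filtration and the resolution, and finally invoke the \(\FI\)-case from \cite{CEF} coordinate-wise. Your explicit factorization of \(\langle P, \chi_{\Irr(\boldsymbol\lambda[\mathbf d])}\rangle_{S_{\mathbf d}}\) across the \(m\) symmetric-group factors just makes concrete what the paper compresses into a citation of \cite[Prop 3.26]{CEF} together with the tensor-product decomposition of \(\FI\sharp^m\)-modules.
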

As we mentioned in the introduction, the claim about multiplicity stability was independently proven by Li-Yu \cite[Thm 1.8]{LY}.
\begin{proof}
Theorem 2.5 gives us filtered \(\FI^m\)-modules \(J^0, \dots, J^N\) and a sequence
\[ 0 \to V \to J^0 \to \cdots \to J^N \to 0\]
which is exact in high enough degree. By semisimplicity of \(\mathbb{Q}[S_{\mathbf d}]\), it is therefore enough to prove the claim for the \(J^i\). But since each \(J^i\) is filtered, meaning it has a filtration whose graded pieces are projective, we can reduce to the case of \(\FI\sharp^m\)-modules, again by semisimplicity. By Proposition 2.3, the character of \(\FI\sharp^m\)-modules is given by a single character polynomial. Finally, by \cite[Prop 3.26]{CEF} we know that each \(\Ind^{\FI}(V_i)\) satisfies representation stability, so since an \(\FI\sharp^m\)-module is just a direct sum of tensor products of these, it therefore satisfies representation stability.
\end{proof}

\subsection{Tensor products and \(\FI^m\)-algebras}
Here we proceed to generalize the notions introduced in \cite[\S4.2]{CEF} from FI to \(\FI^m\). Given \(\FI^m\)-modules \(V\) and \(V'\), their tensor product \(V \otimes V'\) is the \(\FI^m\)-module with \((V \otimes V')_{\mathbf d} = V_{\mathbf d} \otimes V'_{\mathbf d}\), where \(\FI^m\) acts diagonally.

A \emph{graded \(\FI^m\)-module} is a functor from \(\FI^m\) to graded modules, so that each piece is graded, and the induced maps respect the grading. If \(V\) is graded, each graded piece \(V^i\) is thus an \(\FI^m\)-module. If \(V\) and \(W\) are graded, their tensor product \(V \otimes W\) is graded in the usual way. Say that \(V\) is \emph{finite type} if each \(V^i_n\) is finitely generated.

Similarly, an \(\FI^m\)-algebra is a functor from \(\FI^m\) to \(k\)-algebras, which can also be graded. Here our algebras will always be graded-commutative. We can also define graded co-\(\FI^m\)-modules and algebras, as functors from \((\FI^m)^{\text{op}}\). A (co-)\(\FI^m\) algebras is \emph{generated (as an \(\FI^m\)-algebra)} by a submodule \(V\) when each \(A_{\mathbf d}\) is generated as an algebra by \(V_{\mathbf d}\).

Finally, there is another type of tensor product that we will need. Suppose \(V\) is a graded vector space with \(V^0 = k\). Then the space \(V^{\otimes \bullet}\) defined by \((V^{\otimes \bullet})_{\mathbf d} = \boxtimes_{i = 1}^m V^{\otimes d_i}\) has the structure of an \(\FI\sharp^m\)-module, as in \cite[Defn 4.2.5]{CEF}, with the morphisms permuting and acting on the tensor factors.

The following theorem characterizes the above constructions.

\begin{thm} \ \begin{enumerate}
\item If \(V\) and \(V'\) are finitely generated \(\FI^m\)-modules, then \(V \otimes V'\) is finitely generated.
\item Let \(A\) be a graded \(\FI^m\)-algebra generated by a graded submodule \(V\), where \(V^0 = 0\). If \(V\) is finite type, then \(A\) is finite type.
\item Let \(V\) be a graded vector space with \(V^0 = k\). If \(V\) is finite type as a graded vector space, then \(V^{\otimes \bullet}\) is finite type as a graded \(\FI^m\)-module.
\item Let \(X\) be a connected space such that \(H^*(X; k)\) is finite type. Then \(H^*(X^\bullet; k)\) is an \(\FI\sharp^m\)-algebra of finite type.
\end{enumerate}
\end{thm}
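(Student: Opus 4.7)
I would prove the four parts in sequence, with part (1) serving as the essential engine for the others. The plan is as follows.

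For (1), I would reduce to showing that \(M(\mathbf{c}) \otimes M(\mathbf{d})\) is finitely generated for arbitrary \(\mathbf{c}, \mathbf{d}\). Given that, if \(V\) and \(V'\) are finitely generated with surjections \(\bigoplus_i M(\mathbf{c}_i) \twoheadrightarrow V\) and \(\bigoplus_j M(\mathbf{d}_j) \twoheadrightarrow V'\), the tensor product surjects onto \(V \otimes V'\), and its source is a finite direct sum of modules of the form \(M(\mathbf{c}_i) \otimes M(\mathbf{d}_j)\); right-exactness of tensor then finishes the argument. To handle the basic case, I would follow the approach of \cite[Prop 4.2.5]{CEF}: a basis for \((M(\mathbf{c}) \otimes M(\mathbf{d}))_{\mathbf{n}}\) is given by pairs of \(m\)-tuples of injections \((\mathbf{f}, \mathbf{g})\) with \(f_i: [c_i] \hookrightarrow [n_i]\), \(g_i: [d_i] \hookrightarrow [n_i]\). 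I would then partition this basis according to the overlap pattern \(\pi = (\pi_1, \dots, \pi_m)\), where \(\pi_i = \{(a,b) : f_i(a) = g_i(b)\}\) is a partial matching of \([c_i]\) with \([d_i]\). There are only finitely many such patterns, each block is stable under post-composition by \(\FI^m\)-morphisms, and a direct identification shows each block is isomorphic to a free module \(M(\mathbf{e}_\pi)\), yielding the claim.

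For (2), the graded piece \(A^i\) is a quotient of the sum \(\bigoplus V^{i_1} \otimes \cdots \otimes V^{i_k}\) with \(i_1 + \cdots + i_k = i\) and each \(i_j \ge 1\); the hypothesis \(V^0 = 0\) bounds \(k \le i\), so this sum is finite. Each summand is finitely generated by iterating (1), and the Noetherian property (Theorem 2.1) guarantees that the resulting quotient is finitely generated.

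For (3), observe that \(V^{\otimes \bullet}\) is, essentially by definition, the external tensor product \(W \boxtimes \cdots \boxtimes W\) of \(m\) copies of the graded \(\FI\sharp\)-module \(W\) defined by \(W_d = V^{\otimes d}\). The single-factor assertion is precisely \cite[Prop 4.2.5]{CEF}, which states that each graded piece of \(W\) is a finitely generated \(\FI\sharp\)-module. Since the \(j\)-th graded piece of \(V^{\otimes \bullet}\) is a finite direct sum (over compositions \(j_1 + \cdots + j_m = j\)) of external tensor products of graded pieces of \(W\), it is finitely generated as an \(\FI\sharp^m\)-module. For (4), the Künneth formula (using that \(X\) is connected so \(H^0(X;k) = k\), making \(V = H^*(X;k)\) satisfy the hypothesis of (3)) identifies \(H^*(X^{\mathbf{d}}; k)\) with \((V^{\otimes \bullet})_{\mathbf{d}}\) as graded vector spaces, and this identification is compatible with the \(\FI\sharp^m\)-structure induced by coordinatewise permutations and projections. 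Applying (3) completes the argument, and the \(\FI\sharp^m\)-algebra structure is inherited from cup product on each \(H^*(X^{\mathbf{d}}; k)\).

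The main obstacle is the combinatorial decomposition in part (1): one must enumerate overlap patterns carefully and verify that each overlap block is genuinely \(\FI^m\)-stable and free of the predicted rank. Once this foundational case is secured, (2)--(4) reduce to bookkeeping together with the \(m=1\) results already established in \cite{CEF}.
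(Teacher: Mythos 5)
Your proposal is correct, and parts (2)--(4) follow essentially the same route as the paper: identify $A$ as a quotient of the tensor algebra $T(V)$, realize $V^{\otimes \bullet}$ as the $m$-fold external tensor product of the $\FI\sharp$-module $d \mapsto V^{\otimes d}$ and reduce to the $m=1$ case from \cite{CEF}, then finish part (4) via K\"unneth and connectivity of $X$. The genuine difference is in part (1). The paper reduces to the case where both modules are projective and then simply cites Gadish's Theorem~B(1) from \cite{Ga2}. You instead reduce to the free modules $M(\mathbf{c}) \otimes M(\mathbf{d})$ and give a self-contained combinatorial proof: decompose the basis of $(M(\mathbf{c}) \otimes M(\mathbf{d}))_{\mathbf{n}}$ by the coordinatewise overlap pattern $\pi = (\pi_1, \dots, \pi_m)$, observe that post-composition preserves $\pi$ so each block is an $\FI^m$-submodule, and identify the block for $\pi$ with the free module on the pushout $[c_i] \sqcup_{\pi_i} [d_i]$ in each coordinate, yielding $M(\mathbf{e}_\pi)$ with $(\mathbf{e}_\pi)_i = c_i + d_i - |\pi_i|$. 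This is the $\FI^m$ analogue of the CEF argument for FI-modules, and it buys you a direct proof rather than a citation, at the cost of the bookkeeping you identify as the main obstacle. Both arguments are sound. Two minor remarks: in part (2) you invoke Noetherianity to conclude the quotient of a finite-type module is finite type, but quotients of finitely generated modules are automatically finitely generated without Noetherianity (it is submodules that require Theorem~2.1); and in part (4) the paper also notes the Koszul sign arising from reordering tensor factors in K\"unneth, which you silently absorb into ``compatible with the $\FI\sharp^m$-structure'' --- worth a sentence, but not a gap.
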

\begin{proof} \ \begin{enumerate}
\item It is enough to prove the theorem in the case where \(V\) and \(W\) are projective. But this is \cite[Thm B(1)]{Ga2}.
\item Let \(T(V)\) be the tensor algebra on \(V\). It is of finite-type, since
\[ (T(V))^k = \bigoplus_{i_1 + \cdots + i_m = k} V^{i_1} \otimes \cdots \otimes V^{i_m} \]
where each summand on the right is finitely generated by (1). But since \(A\) is an \(\FI^m\)-algebra generated by \(V\), there is an \(\FI^m\)-algebra surjection \(T(V) \twoheadrightarrow A\). Therefore \(A\) is of finite type.
\item We have \((V^{\otimes \bullet})_{\mathbf d} = (V^{\otimes \bullet})_{d_1} \boxtimes \dots \boxtimes (V^{\otimes \bullet})_{d_m}\). Since the \(\FI\sharp\)-algebra \(V^{\otimes \bullet}\) is finitely generated \cite[Prop 4.2.7]{CEF}, we conclude that the \(\FI\sharp^m\)-algebra \(V^{\otimes \bullet}\) is finitely generated.
\item As \cite[Prop 6.1.2]{CEF} explain for the case \(m = 1\), this essentially follows from (3) and the K\"unneth formula; technically sometimes a sign is introduced when permuting the order of tensor factors, but this does not change the proof of (3). The degree 0 part is \(k\) by connectivity.
\end{enumerate}
\end{proof}

\section{Spaces of 0-cycles}
Let \(M\) be a manifold, \(n\) an integer, and \(\mathbf{d} = (d_1, \dots, d_m)\). Define \(M^{\mathbf d} := \prod_i M^{d_i}\). Then \(M^{\bullet}\) forms a co-\(\FI^m\)-space, where an \(\FI\sharp^m\) morphism \(\mathbf{f}: \mathbf{c} \hookrightarrow \mathbf{d}\) acts on \((v_1, \dots, v_m) \in M^{\mathbf d}\) by
\[ f^*(v_1, \dots, v_m) = (f_1^*(v_1), \dots, f_m^*(v_m)) \]
where the action of \(f: [c] \hookrightarrow [d]\) on \(v = (m_1, \dots, m_d) \in M^d\) is the usual co-FI action:
\[ f^* (m_1, \dots m_d) = (m_{f(1)}, \dots, m_{f(c)}) \]
Therefore by applying the contravariant functor of taking cohomology, we obtain an \(\FI^m\)-algebra \(H^*(M^\bullet)\). We proved in Theorem 2.7.4 that \(H^*(M^\bullet)\) is of finite type. Next, define
\[ \widetilde{\mathcal{Z}}^{\mathbf d}_n(M) = \left\{(v_1, \dots, v_m) \in \prod_i M^{d_i} \mathrel{\bigg|} \text{ no } m \in M \text{ appears } n \text{ or more times in each } v_i\right\} \]
Then \(\widetilde{\mathcal{Z}}^{\bullet}_n(M)\) is a co-\(\FI^m\)-subspace of \(M^{\bullet}\). Indeed if \(\mathbf{f}: \mathbf{c} \hookrightarrow \mathbf{d}\) is an \(\FI^m\)-morphism and \((v_1, \dots, v_m) \in \widetilde{\mathcal{Z}}^{\mathbf d}_n(M)\), then \(f^*(v_1, \dots, v_m) = (f_1^*(v_1), \dots, f^*_m(v_m)) \in \widetilde{\mathcal{Z}}^{\mathbf c}_n(M)\). This holds because the coordinates of \(f_i^*(v_i)\) are just drawn from the coordinates of \(v_i\), so the number of times any \(m \in M\) appears in \(f_i^*(v_i)\) is bounded by the number of times \(m\) appears in \(v_i\).

Thus we obtain an \(\FI^m\)-algebra \(H^*(\widetilde{\mathcal{Z}}^\bullet_n(M))\). Our first main theorem is the following. 

\begin{thm}
\thlabel{top_stab}
Let \(k\) be a field and let \(M\) a connected, oriented manifold of dimension at least 2 with \(\dim H^*(M; k) < \infty\). Then the \(\FI^m\)-algebra \(H^*(\widetilde{\mathcal{Z}}^\bullet_n(M); k)\) is of finite type.
\end{thm}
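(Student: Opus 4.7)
The plan is to set up the Leray spectral sequence $\FI^m$-equivariantly. For each $\mathbf d$, the open inclusion $j^{\mathbf d}: \widetilde{\mathcal{Z}}^{\mathbf d}_n(M) \hookrightarrow M^{|\mathbf d|}$ gives rise to a spectral sequence
\[ E_2^{p,q}(\mathbf d) = H^p(M^{|\mathbf d|}; R^q j^{\mathbf d}_* \underline k) \Rightarrow H^{p+q}(\widetilde{\mathcal{Z}}^{\mathbf d}_n(M); k), \]
and since the $j^{\mathbf d}$ assemble into a morphism of co-$\FI^m$-spaces, functoriality promotes this to a spectral sequence of $\FI^m$-modules converging to $H^*(\widetilde{\mathcal{Z}}^\bullet_n(M); k)$.

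The task is to show that each bidegree of $E_2$ is a finitely generated $\FI^m$-module; since the spectral sequence is bounded in each total degree (as $\dim H^*(M; k) < \infty$), the Noetherian property (Theorem 2.1) will then propagate finite generation to $E_\infty$ and to the abutment. Theorem 2.7(4) already yields that $H^*(M^\bullet; k)$ is an $\FI\sharp^m$-algebra of finite type, so the remaining ingredient is an analysis of the higher direct image sheaves $R^q j^{\mathbf d}_*$. Here I rely on Farb-Wolfson-Wood's local description \cite{FWW}: near a point of $M^{|\mathbf d|}$ where certain coordinates coincide, $\widetilde{\mathcal{Z}}^{\mathbf d}_n$ looks like the complement of a linear subspace arrangement, with the diagonals governed by a shellable partition poset. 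Globally, this yields a decomposition of $E_2$ as a direct sum, indexed by partitions, of tensor products of (i) cohomology of a diagonal power of $M$, and (ii) cohomology of a linear subspace arrangement complement.

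Factor (i) is finitely generated as an $\FI^m$-module in each degree by Theorem 2.7(4). Factor (ii) is finitely generated as an $\FI^m$-module by Gadish's theorem \cite{Ga1} on linear subspace arrangements, which is tailor-made for this situation. Theorem 2.7(1) then implies that each summand of $E_2$ is finitely generated; since the differentials are $\FI^m$-module maps, Theorem 2.1 propagates finite generation to $E_\infty$. The abutment $H^i(\widetilde{\mathcal{Z}}^\bullet_n(M); k)$ is a finite extension of the diagonal pieces of $E_\infty$, hence finitely generated. This gives the finite type property for each graded piece of the $\FI^m$-algebra.

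The main obstacle is step two: identifying $E_2$ as a concrete finitely generated $\FI^m$-module. This requires synthesizing the shellability analysis of \cite{FWW}, which is geared toward computing $S_{\mathbf d}$-invariants at a fixed $\mathbf d$, with Gadish's $\FI^m$-framework \cite{Ga1}. One must verify that the partition-indexed decomposition is natural with respect to arbitrary $\FI^m$-morphisms $\mathbf c \hookrightarrow \mathbf d$ (which, unlike the $S_{\mathbf d}$-action at a single $\mathbf d$, change the underlying arrangement by adding new coordinates), and in particular that the induced maps between arrangement complements are covered by Gadish's finite generation result.
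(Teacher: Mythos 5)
Your overall strategy — set up the Leray spectral sequence $\FI^m$-equivariantly for $\widetilde{\mathcal{Z}}^{\bullet}_n(M) \hookrightarrow M^{\bullet}$, use Gadish's theorem on subspace arrangements plus Theorem 2.7 to show $E_2$ is finite type, and finish by Noetherianity — matches the paper's. However, there is a significant divergence in the middle step, and the obstacle you flag at the end is precisely where the paper takes a different, cleaner path.

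You propose to decompose $E_2$ as a direct sum indexed by partitions (the decomposition of FWW [Thm 5.6]) and then apply Theorem 2.7(1) to each summand. The difficulty, which you correctly identify but do not resolve, is that the indexing set $\Pi^{\mathbf d}_n$ itself varies with $\mathbf d$: promoting the $S_{\mathbf d}$-equivariant decomposition at each fixed $\mathbf d$ to a decomposition of the $\FI^m$-module $E_2^{p,q}$ into finitely many finitely generated sub-$\FI^m$-modules requires organizing the partitions by orbit type across all $\mathbf d$ simultaneously, verifying naturality under all $\FI^m$-morphisms (which add coordinates), and controlling the number of orbit types in each bidegree. This is nontrivial, and Theorem 2.7(1) on its own does not give it to you. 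The paper avoids the issue entirely by invoking the stronger structural fact (also from FWW [Thm 5.6]) that the $E_2$ page is \emph{generated as an $\FI^m$-algebra} by the bottom row $E_2^{*,0} \cong H^*(M^{\bullet})$ and the left column $E_2^{0,*} \cong H^*(\widetilde{\mathcal{Z}}_n(\mathbb{R}^N))$. Both are finite type (the former by Theorem 2.7(4), the latter by Gadish [Ga1, Thm B]), and Theorem 2.7(2) — finite type of an $\FI^m$-algebra generated by a finite-type submodule — then gives finite type of the whole $E_2$ page in one stroke, with no need to dissect a direct sum. The partition-poset/shellability analysis of FWW is still used, but only implicitly to establish the algebra-generation statement, not to produce an $\FI^m$-equivariant direct sum decomposition of $E_2$. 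If you want your route to go through, you would need to supply the naturality argument for the partition-indexed decomposition and a bound on the number of orbit types per bidegree, neither of which is trivial; the algebra-generation route is the shorter path.
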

In the case of usual configuration space, where \(m=1\) and \(n = 2\), \thref{top_stab} was proven by Church-Ellenberg-Farb \cite[Thm 6.2.1]{CEF}, based on an analysis of the Leray spectral sequence studied by Totaro \cite{To}.
\begin{proof}
Following \cite[\S5]{FWW}, the basic strategy for understanding \(H^*(\widetilde{\mathcal{Z}}^\bullet_n(M); k)\) is to analyze the Leray spectral sequence associated to the inclusion \(\widetilde{\mathcal{Z}}^{\mathbf d}_n(M) \hookrightarrow M^{\mathbf d}\). This is a spectral sequence with \(E_2\) page
\[ E^{p,q}_{2,\mathbf{d}} = H^p(M^{\mathbf{d}}; U \mapsto H^q(U \cap \widetilde{\mathcal{Z}}^{\mathbf d}_n(M))) \]
and converging to \(H^*(\widetilde{\mathcal{Z}}^\bullet_n(M))\). Since, as we saw, the inclusion \(\widetilde{\mathcal{Z}}^{\bullet}_n(M) \hookrightarrow M^{\bullet}\) is a map of co-\(\FI^m\)-spaces, and the Leray spectral sequence is functorial, we actually obtain a spectral sequence \(E^{p,q}_2\) of \(\FI^m\)-modules.

Now, the bottom row \(E^{*,0}_2\) is just isomorphic to \(H^*(M^{\bullet})\) as an \(\FI^m\)-module. Furthermore, as Farb-Wolfson-Wood prove \cite[Thm 5.6]{FWW}, the leftmost column \(E^{0,*}_2\) is isomorphic to \(H^*(\widetilde{\mathcal{Z}}_n(\mathbb{R}^N))\), where \(N = \dim M\). Furthermore, they show that the \(E_2\) page is generated as an \(\FI^m\)-algebra by \(E^{*,0}_2\) and \(E^{0,*}_2\). As we showed in Proposition 3.1, \(E^{*,0}_2\) is finite-type. And \(\widetilde{\mathcal{Z}}_n(\mathbb{R}^N)\) is a subspace arrangement of the type studied by Gadish \cite{Ga1}. He proves \cite[Thm B]{Ga1} that \(H^*(\widetilde{\mathcal{Z}}_n(\mathbb{R}^N))\) is a finite-type \(\FI\sharp^m\)-module. Since it is generated as an algebra by a finite-type \(\FI^m\)-module, the \(E_2\) page as a whole is a finite-type \(\FI^m\)-module. The \(E_\infty\) page is a subquotient of the \(E_2\) page, so by Noetherianity it is finite type, and therefore \(H^*(\widetilde{\mathcal{Z}}^\bullet_n(M))\) is finite type.
\end{proof}

\begin{proof}[Proof of Theorem 1.1]
This is a direct corollary of Theorem 3.2 after applying Theorem 2.6
\end{proof}

Next, as in \cite{CEF}, if \(M\) is the interior of a compact manifold with boundary, we obtain the following generalization of \cite[Prop 6.1.2]{CEF}.

\begin{prop}
Let \(M\) be the interior of a connected compact manifold \(\overline{M}\) with nonempty boundary \(\partial \overline{M}\). Then \(\widetilde{\mathcal{Z}}_n(M)\) has the structure of a homotopy \(\FI\sharp^m\)-space, that is, a functor \(\FI\sharp^m \to \mathrm{hTop}\), the category of spaces and homotopy classes of maps.
\end{prop}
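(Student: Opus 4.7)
The plan is to mimic the construction of \cite[Prop 6.1.2]{CEF} for ordinary configuration spaces, extending the co-$\FI^m$-space structure on $\widetilde{\mathcal{Z}}^\bullet_n(M)$ already constructed at the start of \S3 to a homotopy co-$\FI\sharp^m$-space structure. The key geometric ingredient is the collar neighborhood provided by the hypothesis that $M$ is the interior of a compact manifold with nonempty boundary. I would fix once and for all a collar embedding $c: \partial\overline{M} \times [0,1) \hookrightarrow \overline{M}$, an infinite sequence of distinct ``parking points'' $p_1, p_2, \ldots \in c(\partial\overline{M} \times (0, 1/2))$, and a self-embedding $e: M \to M$ isotopic to $\id_M$ whose image misses every $p_k$; for instance, $e$ could be constructed by a radial push of the collar toward the boundary.

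Given an $\FI\sharp^m$-morphism $\phi = (\mathbf{z}, f): \mathbf{x} \to \mathbf{y}$ with $\mathbf{z} \subseteq \mathbf{x}$ and $f: \mathbf{z} \hookrightarrow \mathbf{y}$, I would define a continuous map $\phi^*: \widetilde{\mathcal{Z}}^{\mathbf y}_n(M) \to \widetilde{\mathcal{Z}}^{\mathbf x}_n(M)$ as follows. First apply $e$ coordinate-wise to a configuration $(v_1, \ldots, v_m)$, so that the resulting tuple has every entry in $M \setminus \{p_1, p_2, \ldots\}$. Then, in the $i$-th block, set the $k$-th coordinate of $\phi^*(v)_i$ equal to $e(v_i)_{f_i(k)}$ when $k \in z_i$, and equal to a parking point $p_{\alpha(i,k)}$ otherwise, where $\alpha$ is a fixed injection assigning distinct parking points to distinct pairs $(i,k)$ with $k \in x_i \setminus z_i$. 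The result lies in $\widetilde{\mathcal{Z}}^{\mathbf x}_n(M)$ because the inserted parking points are pairwise distinct and disjoint from the image of $e$, so no point of $M$ that failed to be ``heavy'' (i.e.\ occurring $\geq n$ times in every component) in $v$ can become heavy in $\phi^*(v)$.

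The remaining task is to verify that $\phi \mapsto \phi^*$ descends to a functor from $\FI\sharp^m$ to $\mathrm{hTop}$. Different admissible choices of collar, parking sequence, and isotopy from $e$ to $\id_M$ produce homotopic maps, because the space of such choices is contractible; the same kind of argument shows that $(\phi\psi)^*$ and $\psi^* \phi^*$ are homotopic for composable $\phi, \psi$. Restricted to the subcategory $\FI^m \subseteq \FI\sharp^m$ (where $\mathbf{z} = \mathbf{x}$, so no parking points are inserted), the construction produces a map homotopic to the coordinate projection defining the original co-$\FI^m$-structure on $\widetilde{\mathcal{Z}}^\bullet_n(M)$, since $e \simeq \id_M$. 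The main obstacle is the bookkeeping required to assemble these homotopies coherently, but this is precisely what is carried out in \cite[Prop 6.1.2]{CEF} for the case $m = 1$, $n = 2$; the argument extends formally to arbitrary $\mathbf{d}$, and if anything the weaker non-collision condition defining $\widetilde{\mathcal{Z}}^{\mathbf d}_n$ (compared to $\Conf$) only makes it easier to check that the inserted parking points keep us inside the target space.
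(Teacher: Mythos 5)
Your construction is essentially the same as the paper's: both fix a collar neighborhood, shrink the configuration away from it via an embedding isotopic to the identity, and extend the co-$\FI^m$-structure to $\FI\sharp^m$ by inserting the new coordinates as distinct points in the collar, deferring the homotopy-coherence bookkeeping to \cite[Prop 6.4.2]{CEF}. The only cosmetic difference is that you fix one global sequence of parking points and an injection $\alpha$, whereas the paper allows an arbitrary embedding $q^{\mathbf Y}_{\mathbf X}$ of the new labels into the collar and uses connectedness of the relevant configuration space to see the choice is irrelevant up to homotopy; both variants work and yield the same functor.
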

\begin{proof}
We follow the argument in \cite{CEF}. Fix a collar neighborhood \(R\) of one component \(\partial \overline{M}\), and fix a homeomorphism \(\Phi: M \cong M - \overline{R}\) isotopic to the identity. For any \(m\)-tuple of inclusions of \(m\)-tuples of finite sets \(\mathbf{X} \subset \mathbf{Y}\), define a map
\[ \Psi^Y_X: \widetilde{\mathcal{Z}}^{\mathbf{X}}_n(M) \to \widetilde{\mathcal{Z}}^{\mathbf{Y}}_n(M) \]
up to homotopy, as follows. First, if \(\mathbf{Y} = \mathbf{X}\), set \(\Psi^{\mathbf Y}_{\mathbf X} = \id\). Next, note that \(\Conf_{\sqcup X_i}(M) \hookrightarrow \widetilde{\mathcal{Z}}^{\mathbf X}_n(M)\). So fix an embedding \(q^{\mathbf Y}_{\mathbf X}: \sqcup_i ( Y_i - X_i) \hookrightarrow R\) of \(\Conf_{\sqcup Y_i - X_i}(M)\). Then any element \(f : \sqcup X_i \to M\) in \(\widetilde{\mathcal{Z}}^{\mathbf X}_n(M)\) extends to a map \(\Psi^{\mathbf Y}_{\mathbf X}(f): \sqcup Y_i \to M\) by
\[
\Psi^{\mathbf Y}_{\mathbf X}(f)(t) = \begin{cases} \Psi(f(t)) & t \in \sqcup_i X_i \\ q^{\mathbf Y}_{\mathbf X}(t) & t \notin \sqcup_i X_i \end{cases}
\]
The image of \(\Phi\) is disjoint from \(R\), while the image of \(q^{\mathbf Y}_{\mathbf X}\) is contained in \(R\), so the above map does not have any more coincidences of points than \(f\) itself did, and therefore \(\Psi^{\mathbf Y}_{\mathbf X}(f)\) does give an element of \(\widetilde{\mathcal{Z}}^{\mathbf Y}_n(M)\). Furthermore, since \(\Conf_{\sqcup Y_i - X_i}(M)\) is connected (since \(R\) is, and \(\dim R \ge 2\)), different choices of \(q^{\mathbf Y}_{\mathbf X}\) give homotopic maps, so \(\Phi^{\mathbf Y}_{\mathbf X}\) is well-defined up to homotopy.

Now, an \(\FI\sharp^m\) morphism \(\mathbf Z \to \mathbf Y\) consists of a map \(\mathbf X \hookrightarrow \mathbf{Z}\) and a map \(X \hookrightarrow  Y\). Normally if we were extending from an \(\FI^m\)-structure, we would think of \(\mathbf X\) as being a subset of \(\mathbf Z\), but since we are extending from a \emph{co}-\(\FI^m\)-structure, it is more natural to think of \(\mathbf X\) as a subset of \(\mathbf Y\), with an explicit map \(\mathbf{a}: \mathbf{X} \to \mathbf{Z}\). The induced map is then given by
\begin{align*}
\widetilde{\mathcal{Z}}^{\mathbf Z}_n(M) \to \widetilde{\mathcal{Z}}^{\mathbf X}_n(M) & \xrightarrow{\Psi^{\mathbf Y}_{\mathbf X}} \widetilde{\mathcal{Z}}^{\mathbf Y}_n(M) \\
(v_1, \dots, v_m) \mapsto (a^*_1(v_1), \dots, a^*_m(v_m))&
\end{align*}
It is straightforward to verify that this is functorial up to homotopy, as \cite[Prop 6.4.2]{CEF} do for \(m = 1\).
\end{proof}
In particular, when the conditions of Theorem 3.2 hold, then \(H^*(\widetilde{\mathcal{Z}}_n(M))\) is an \(\FI\sharp^m\)-module. We therefore obtain the following.

\begin{cor}
Let \(M\) be a connected orientable manifold of dimension at least 2 which is the interior of a compact manifold with nonempty boundary. Then for each \(i\), the characters of the \(S_{\mathbf d}\)-representations \(H^i(\widetilde{\mathcal{Z}}^{\mathbf d}_n(M; \mathbb{Q})\) are given by a single character polynomial for \emph{all} \(\mathbf{d}\).
\end{cor}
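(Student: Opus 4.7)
The plan is to assemble three results already established in the paper: the $\FI\sharp^m$-extension of the cohomology from Proposition 3.3, the finite generation statement from Theorem 3.2, and the character-polynomial statement for $\FI\sharp^m$-modules from Proposition 2.3. The strength of the corollary (equality of $\chi_{V_{\mathbf d}}$ with a single polynomial on the nose, for \emph{all} $\mathbf d$, rather than only for $\mathbf d \gg 0$ as in Theorem 2.6) is precisely the payoff of having the richer $\FI\sharp^m$ structure on the cohomology, so the strategy is to push that structure through to characters.

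First I would invoke Proposition 3.3, which under the hypothesis that $M$ is the interior of a compact manifold with nonempty boundary, promotes the co-$\FI^m$-space $\widetilde{\mathcal{Z}}^\bullet_n(M)$ to a homotopy $\FI\sharp^m$-space. Applying cohomology with $\mathbb{Q}$-coefficients (a homotopy invariant) then gives each $H^i(\widetilde{\mathcal{Z}}^\bullet_n(M); \mathbb{Q})$ the structure of an $\FI\sharp^m$-module, refining the $\FI^m$-module structure coming from the co-$\FI^m$-space structure alone.

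Second, I would apply Theorem 3.2 (\thref{top_stab}), whose hypotheses are satisfied because $M$ is connected, orientable, of dimension at least 2, and the interior of a compact manifold so has $\dim H^*(M;\mathbb{Q}) < \infty$. This gives that the $\FI^m$-algebra $H^*(\widetilde{\mathcal{Z}}^\bullet_n(M); \mathbb{Q})$ is of finite type, so in particular each $H^i$ is a finitely generated $\FI^m$-module and therefore a finitely generated $\FI\sharp^m$-module. Finally, Proposition 2.3 applied to this finitely generated $\FI\sharp^m$-module yields a single character polynomial $P \in \mathbb{Q}[\{X^{(k)}_j\}]$ with $\chi_{H^i(\widetilde{\mathcal{Z}}^{\mathbf d}_n(M);\mathbb{Q})} = P_{\mathbf d}$ for \emph{every} $\mathbf d$, which is the statement of the corollary.

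There is essentially no main obstacle: all the hard work was done in Proposition 3.3 (constructing the $\FI\sharp^m$-structure via a collar/boundary argument) and in Theorem 3.2 (the spectral sequence analysis). The only thing to verify is that the two structures are compatible, i.e.\ that the $\FI\sharp^m$-structure on $H^i$ from Proposition 3.3 restricts to the same $\FI^m$-structure used in Theorem 3.2; this is immediate because the homotopy $\FI\sharp^m$-action extends the co-$\FI^m$-action on the underlying co-$\FI^m$-space, which in turn gives the $\FI^m$-action on cohomology by functoriality.
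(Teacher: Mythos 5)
Your proposal is correct and follows exactly the paper's intended route: Proposition 3.3 endows $H^i(\widetilde{\mathcal{Z}}^\bullet_n(M);\mathbb{Q})$ with an $\FI\sharp^m$-module structure, Theorem 3.2 (whose hypotheses hold because $M$ is homotopy equivalent to the compact $\overline{M}$, so $\dim H^*(M;\mathbb{Q})<\infty$, and orientable implies orientable-once-oriented) gives finite generation, and Proposition 2.3 then yields a single character polynomial valid for all $\mathbf d$. The closing remark about compatibility of the $\FI\sharp^m$- and $\FI^m$-structures is the right thing to note, and the paper indeed treats the whole deduction as immediate after Proposition 3.3.
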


\section{\'Etale representation stability of \(\widetilde{\mathcal{Z}}^{\mathbf d}_n(X)\)}
In this section we consider the case where we replace the manifold \(M\) with a scheme \(X\) over \(\mathbb{Z}[1/N]\), as in Farb-Wolfson \cite{FW2}. Here the \(\mathbb{C}\)-points \(X(\mathbb{C})\) take the place of \(M\). However, now we can also consider the points \(X(\mathbb{F}_q)\) over a finite field \(\mathbb{F}_q\).

So, if \(X\) is a scheme over \(\mathbb{Z}[1/N]\), define \(\widetilde{\mathcal{Z}}^{\mathbf d}_n(X)\) as the functor of points
\[ \widetilde{\mathcal{Z}}^{\mathbf d}_n(X)(R) = \left\{ (v_1, \dots, v_m) \in \prod_i X(R)^{d_i} \mathrel{\bigg|} \text{ no } x \in X(R) \text{ appears } n \text{ or more times in each } v_i\right\} \]
for any \(\mathbb{Z}[1/N]\)-algebra \(R\). Thus \(\widetilde{\mathcal{Z}}^{\mathbf d}_n(X)\) has the structure of a scheme over \(\mathbb{Z}[1/N]\). If \(X\) is smooth, then \(\widetilde{\mathcal{Z}}^{\mathbf d}_n(X)\) is smooth. Treated as a single object, \(\widetilde{\mathcal{Z}}^\bullet_n(X)\) forms a co-\(\FI^m\)-scheme. 

Let \(K\) be a number field or an finite field over \(\mathbb{Z}[1/N]\), and let \(l\) be a prime number invertible in \(K\). Then we can base-change to \(\overline{K}\) and consider the etale cohomology \(H^i_{\acute et}(\widetilde{\mathcal{Z}}^{\mathbf d}_n(X)_{/\overline{K}}, \mathbb{Q}_l)\). Thus \(H^i_{\acute et}(\widetilde{\mathcal{Z}}^{\bullet}_n(X)_{/\overline{K}}, \mathbb{Q}_l)\) is an \(\FI^m\)-module equipped with an action of \(\Gal(\overline{K}/K)\) commuting with the \(\FI^m\) action. Following \cite{FW2}, we call such an object a \emph{\(\Gal(\overline{K}/K)\)-\(\FI^m\)-module}.

We would like to prove Theorem 1.2, that \(H^i_{\acute et}(\widetilde{\mathcal{Z}}^{\bullet}_n(X)_{/\overline{K}}, \mathbb{Q}_l)\) is finitely generated as an \(\Gal(\overline{K}/K)\)-\(\FI^m\)-module. One method of doing this, the one used in Farb-Wolfson \cite{FW2} and Casto \cite{Ca2}, would be to find an appropriate compactification of \(\widetilde{\mathcal{Z}}^{\mathbf d}_n(X)\), use this to argue that the \'etale cohomology \(H^i_{\acute et}(\widetilde{\mathcal{Z}}^{\mathbf d}_n(X)_{/\overline{K}}, \mathbb{Q}_l)\) is isomorphic to the singular cohomology \(H^i(\widetilde{\mathcal{Z}}^{\mathbf d}_n(X)(\mathbb{C}), \mathbb{Q}_l)\) of the complex points, and then conclude by Theorem 3.2. We are confident that this approach could be made to work. However, constructing and proving the requisite properties about the desired compactification would involve a fair amount of technical work that we want to avoid. (In the special case of configuration spaces, this technical work was done by Fulton-Macpherson \cite{FM}.) Instead, we will reprove Theorem 3.2 for \'etale cohomology, by directly computing with the same Leray spectral sequence for the inclusion \(\widetilde{\mathcal{Z}}^{\bullet}_n(X) \hookrightarrow X^\bullet\).

\begin{proof}[Proof of Theorem 1.2]
This argument is essentially the one given in the proof of the second part of \cite[Thm 5.6]{FWW}, on page 28. However, Farb-Wolfson-Wood do not quite state the conclusion in terms of \'etale cohomology, so we will redo it. Given a variety \(X\), define the ``big diagonal''
\[ \Delta^{\mathbf d}_n(X) := \left\{ (v_1, \dots, v_m) \in \prod_i X^{d_i} \mathrel{\bigg|} \text{ some } x \in X \text{ appears at least } n \text{ times in each } v_i\right\}\]
so that \(\widetilde{\mathcal{Z}}^{\mathbf d}_n(X) = X^{\mathbf d} - \Delta^{\mathbf d}_n(X)\). 
Let \(m := \dim X\). By \cite[Lem 16.8]{Mi}, the pair \((\Delta^{\mathbf d}_n(X), X^{\mathbf d})\) is locally isomorphic (for the \'etale topology) to the pair \((\Delta^{\mathbf d}_n(\mathbb{A}^m), (\mathbb{A}^m)^{\mathbf d})\). Indeed, we can describe this isomorphism explicitly. Choose regular functions \(f_1, \dots, f_m\) defined on an open neighborhood \(V\) in \(X\), giving an \'etale map \(F: V \to \mathbb{A}^m\). This gives us an \'etale map \(F^{\mathbf d}: V^{\mathbf d} \to (\mathbb{A}^m)^{\mathbf d}\), and under this map, the image of \(\Delta^{\mathbf d}_n(X)\) is \(\Delta^{\mathbf d}_n(\mathbf{A}^m)\).

The point is the following: in the classical topology (as in \cite{To}) we were able to argue about the sheaf \(U \mapsto H^q(U \cap \widetilde{\mathcal{Z}}^{\mathbf d}_n(X))\) at a point \(x\) in the big diagonal by picking an open set \(U\) small enough that it only intersects the irreducible component of \(\Delta^{\mathbf d}_n(X)\) containing \(x\). In the \'etale topology, we do not have enough fine-grained control over neighborhoods to find one that only intersects one component. However, the argument still goes through, because the point is that, as we have just seen, we can find an \'etale neighborhood \(V\) of \(x\) that \'etale-locally looks like \(\widetilde{\mathcal{Z}}^{\mathbf d}_n(\mathbb{A}^m)\), and this is enough.

Indeed, from here we can basically follow the proof of the first part of \cite[Thm 5.6]{FWW}. As Farb-Wolfson-Wood say (and using their notation), it is enough to give an \(S_{\mathbf d}\)-equivariant isomorphism of sheaves
\begin{equation} \label{iso_shvs}
R^q j_{X*} \mathbb{Z} \cong \bigoplus_{I \in \Pi^{\mathbf d}_n} \epsilon_I(q)
\end{equation}
where \(j_X: \widetilde{\mathcal{Z}}^{\mathbf d}_n(X) \hookrightarrow X^{\mathbf d}\) and
\[ \epsilon_I(q) := \tilde{H}_{\text{cd}(I,X) - q - 2}(\Delta(\overline{\Pi^{\mathbf d}_n(\le I)}); \mathbb{Z}) \otimes \text{coor}(X_I) \]
But by restricting to the \'etale neighborhoods \(V\) mentioned above, we obtain for each \(x \in \Delta^{\mathbf d}_n(X)\), an isomorphism of stalks
\[ (R^q j_{X*} \mathbb{Z})_x \cong (R^q j_{\mathbb{A}^m*} \mathbb{Z})_y \]
where the right hand side denotes the stalk at a generic \(y\) in the component of \(\Delta^{\mathbf d}_n(\mathbb{A}^m)\) containing \(F(x)\). This isomorphism of stalks is explicitly mentioned on \cite[p. 28]{FWW}. But now, since \cite{FWW} already verified (\ref{iso_shvs}) for \(\mathbb{A}^m\), we conclude that it holds for \(X\).

Our argument from the proof of Theorem 3.1 therefore applies directly, since we have reproven the necessary tools of \cite[Thm 5.6]{FWW}. To wit, we have just shown that the \(E_2\) page of the spectral sequence is generated as a \(\Gal(\overline{K}/K)\)-\(\FI^m\)-algebra by \(H^*_{\acute et}(X^{\bullet})\) and \(H^*_{\acute et}(\widetilde{\mathcal{Z}}_n(\mathbb{A}^d))\). Again, the first is finite-type by Proposition 3.1, and second is finite-type by \cite[Thm B]{Ga2}---note that Gadish specifically addresses the \'etale cohomology and \(\Gal(\overline{K}/K)\)-action of this subspace arrangement. Thus the \(E_2\) page as a whole is a finite-type \(\Gal(\overline{K}/K)\)-\(\FI^m\)-module. So again by Noetherianity, we conclude that \(H^*_{\acute et}(\widetilde{\mathcal{Z}}^{\bullet}_n(X))\) is finite type.
\end{proof}

\section{Convergence}
Recall from Theorem 2.6 that if \(V\) is a finitely generated \(\FI^m\)-module, then the characters \(\chi_{V_{\mathbf d}}\) are eventually given by a single character polynomial for all large \(\mathbf d\). Furthermore, for another character polynomial \(P\), we know that the inner product \(\langle P_{\mathbf d}, V_{\mathbf d}\rangle_{S_{\mathbf d}}\) is eventually independent of \(\mathbf d\). We put
\[ \langle P, V \rangle = \lim_{\mathbf d \to \infty} \langle P_{\mathbf d}, V_{\mathbf d} \rangle_{S_{\mathbf d}} \]
for the limiting multiplicity.

Thus if \(Z\) is a co-\(\FI^m\)-scheme that satisfies \'etale representation stability, there is a stable inner product \(\langle P, H^i_{\acute et}(Z; \mathbb{Q}_l) \rangle\), eventually independent of \(\mathbf d\). However, for our applications we need to bound how these inner products grow in \(i\). The following proposition is helpful in doing so:

\begin{prop}
For any graded \(\FI^m\)-module \(V^*\), the following are equivalent: \begin{enumerate}
\item For each character polynomial \(P\), \(|\langle P_{\mathbf d}, V^i_{\mathbf d}\rangle|\) is bounded subexponentially in \(i\) and uniformly in \(\mathbf d\).

\item For every \(\mathbf a\), the dimension \(\dim\left((V^i_{\mathbf d})^{S_{\mathbf d - \mathbf a}}\right)\) is bounded subexponentially in \(i\) and uniformly in \(\mathbf d\).
\end{enumerate}
\end{prop}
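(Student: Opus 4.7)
My plan is to run both conditions through Frobenius reciprocity, using the fact that characters of certain induced representations furnish character polynomials that span every character polynomial. The key observation is that the class function $\prod_i a_i! \binom{X_1^{(i)}}{a_i}$, which I will call $P_{\mathbf a}$, is a character polynomial equal to the character of the induced representation $\Ind_{S_{\mathbf d - \mathbf a}}^{S_{\mathbf d}} \triv$: this representation is the permutation module on the product $\prod_i S_{d_i}/S_{d_i - a_i}$, so its character at $(\sigma_1, \dots, \sigma_m)$ counts ordered $a_i$-tuples of elements of $[d_i]$ fixed by $\sigma_i$, giving the falling-factorial product $\prod_i X_1^{(i)}(X_1^{(i)} - 1) \cdots (X_1^{(i)} - a_i + 1)$. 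By Frobenius reciprocity,
\[ \langle P_{\mathbf a}, V^i_{\mathbf d}\rangle_{S_{\mathbf d}} = \dim (V^i_{\mathbf d})^{S_{\mathbf d - \mathbf a}}. \]
The implication (1) $\Rightarrow$ (2) is then immediate from applying (1) to $P = P_{\mathbf a}$.

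For (2) $\Rightarrow$ (1) I will use the following spanning claim: the characters $\chi_{\mathbf a, W} := \chi_{\Ind_{S_{\mathbf d - \mathbf a} \times S_{\mathbf a}}^{S_{\mathbf d}}(\triv \boxtimes W)}$, as $\mathbf a$ varies and $W$ varies over irreducibles of $S_{\mathbf a}$, are themselves character polynomials whose $\mathbb{Q}$-span is the full space of character polynomials. Granting this, every $P$ expands as a finite sum $P = \sum c_{\mathbf a, W} \chi_{\mathbf a, W}$, and Frobenius reciprocity gives
\[ \langle \chi_{\mathbf a, W}, V^i_{\mathbf d}\rangle_{S_{\mathbf d}} = \langle W, (V^i_{\mathbf d})^{S_{\mathbf d - \mathbf a}}\rangle_{S_{\mathbf a}}, \]
whose absolute value is at most $\dim W \cdot \dim(V^i_{\mathbf d})^{S_{\mathbf d - \mathbf a}}$. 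Hypothesis (2) bounds each term subexponentially in $i$ uniformly in $\mathbf d$, and a finite sum of subexponentially bounded functions is still subexponentially bounded, giving (1).

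The main obstacle is the spanning claim. For $m = 1$ this is standard, essentially appearing in \cite{CEF}. For general $m$, because $\chi_{\mathbf a, W}$ with $W = W_1 \boxtimes \cdots \boxtimes W_m$ factors as a product of the corresponding $m = 1$ characters in each coordinate (parallel to equation (1) of \S 2 applied at the level of characters), and because any polynomial in $\mathbb{Q}[\{X_j^{(i)}\}]$ is a finite sum of products of polynomials in the $X_j^{(i)}$'s at fixed $i$, the $m$-variable spanning statement reduces to the $m = 1$ case applied in each coordinate. The only real work is this bookkeeping; everything else is a direct application of Frobenius reciprocity and the triangle inequality.
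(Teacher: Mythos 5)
Your proof is correct and follows essentially the same route as the paper's: both directions hinge on Frobenius reciprocity identifying $\dim(V^i_{\mathbf d})^{S_{\mathbf d - \mathbf a}}$ with the inner product against $M(\mathbf a)_{\mathbf d} = \Ind_{S_{\mathbf d - \mathbf a}}^{S_{\mathbf d}}(\triv)$, and on the fact that the characters of the induced modules $\Ind_{S_{\mathbf d-\mathbf a}\times S_{\mathbf a}}^{S_{\mathbf d}}(\triv\boxtimes W)$ span the space of character polynomials. The only differences are cosmetic: you write out the spanning argument (reduction to $m=1$) that the paper leaves implicit, and you bound $\langle W,(V^i_{\mathbf d})^{S_{\mathbf d-\mathbf a}}\rangle_{S_{\mathbf a}}$ by a constant times $\dim(V^i_{\mathbf d})^{S_{\mathbf d-\mathbf a}}$ rather than invoking, as the paper does, the containment $\Ind^{\FI^m}(W)\subset M(\mathbf a)$.
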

\begin{proof}
First, note that \(\dim\left((V^i_{\mathbf d})^{S_{\mathbf d - \mathbf a}}\right) = \langle M(\mathbf a)_{\mathbf d}, V_{\mathbf d} \rangle_{S_{\mathbf d}}\). For any irreducible \(S_{\mathbf a}\)-representation \(W\), we have \(\Ind^{\FI^m}(W) \subset \Ind^{\FI^m}(k[S_{\mathbf{a}}]) = M(\mathbf a)\), and therefore
\[\langle \Ind^{\FI^m}(W)_{\mathbf d}, V_{\mathbf d} \rangle_{S_{\mathbf d}} < \dim\left((V^i_{\mathbf d})^{S_{\mathbf d - \mathbf a}}\right)\]
The second condition for arbitrary \(P\) follows, since any \(P\) is a finite linear combination of the \(\chi_{\Ind^{\FI^m}(W)}\) for some irreducible representations \(W\).
\end{proof}

If a graded \(\FI^m\)-algebra \(V^*\) satisfies these two equivalent conditions, we say it is \emph{convergent}. Theorem 1.3 thus states that, under appropriate conditions, the \(\FI^m\)-algebra \(H^*_{\acute et}(\widetilde{\mathcal{Z}}^{\bullet}_n(X)_{/\overline{K}}, \mathbb{Q}_l)\) is convergent.

\begin{proof}[Proof of Theorem 1.3]
Our proof proceeds along the lines of the proofs of \cite[Thm 3.1 and Lem 7.1]{FWW}. In order to follow their proofs, we will need to recall some of the definitions they use. Recall \cite[Defn 4.1]{FWW} that the \emph{\(n\)-equals partition lattice} \(\Pi^{\mathbf d}_n\), for \(\mathbf d \in \FI^m\), is the poset of partitions of \(\mathbf d\) such that each block of the partition either has size 1, or contains at least \(n\) elements from each of the \(m\) columns. These are ordered by (reverse) refinement: \(I \le J\) if and only if \(I\) refines \(J\). We refer to blocks of size 1 as ``singleton blocks'', and the others (of size at least \(m \times n\)) as ``non-singleton blocks''. Recall that an \emph{edge} in a poset \(P\) is a pair \(a,b \in P\) with \(a < b\) and no elements between them, and a \emph{chain} of length \(r\) in \(P\) is a string \(a_0 < \cdots < a_r\) with \(a_i \in P\). Finally, recall that in \cite[Thm 4.9]{FWW}, Farb-Wolfson-Wood determine the three types of edges in \(\Pi^{\mathbf d}_n\), which we will make reference to:
\begin{description}
\item[Block creation:] A new non-singleton block with \(n\) elements each from the \(m\) columns is created from singletons.
\item[Singleton adding] A singleton block is merged with a non-singleton block.
\item[Block merging] Two non-singleton blocks are merged.
\end{description}
Now, recall that our goal is to show that
\[ \dim H^*_{\acute et}\left(\widetilde{Z}_n^{\mathbf d}(X)_{/\overline{K}}; \mathbb{Q}_l\right)^{S_{\mathbf{d} - \mathbf{a}}} \le F_{\mathbf a}(i) \]
where \(F_{\mathbf a}(i)\) is a polynomial in \(i\) (and independent of \(\mathbf d\)). Recall from the proof of \thref{etale_stab} that there is a spectral sequence \(E^{p,q} \implies H^{p+q}_{\acute et}(\widetilde{Z}_n^{\mathbf d}(X)_{/\overline{K}}; \mathbb{Q}_l)\) with
\[ E^{p,q}_2 \cong \bigoplus_{I \in \Pi^{\mathbf d}_n} H^p(X_I; \epsilon_I(q)) \]
We know that \(H^{i}_{\acute et}(\widetilde{Z}_n^{\mathbf d}(X)_{/\overline{K}}; \mathbb{Q}_l)\) is a subquotient of \(\bigoplus_{p+q=i} E^{p,q}_2\), so it is enough to prove that \(\dim (E^{p,q}_2)^{S_{\mathbf d - \mathbf a}}\) is bounded by a polynomial in \(p\) and \(q\) (uniformly in \(\mathbf d\)).

Given \(I \in \Pi^{\mathbf d}_n\), let \(S_I := S_{I_1} \times S_{I_2} \times \cdots\). Now, we have
\begin{align}
\begin{split}
(E^{p,q}_2)^{S_{\mathbf d - \mathbf a}} &= \left( \bigoplus_{I \in \Pi^{\mathbf d}_n} H^p(X_I; \epsilon_I(q))\right)^{S_{\mathbf d - \mathbf a}} = \left( \bigoplus_{I \in \Pi^{\mathbf d}_n} H^p(X_I; \epsilon_I(q))^{S_I \cap S_{\mathbf d - \mathbf a}}\right)^{S_{\mathbf d - \mathbf a}} \\
&= \left( \bigoplus_{I \in \Pi^{\mathbf d}_n} H^p\left(X_I; \epsilon_I(q)^{S_I \cap S_{\mathbf d - \mathbf a}}\right)\right)^{S_{\mathbf d - \mathbf a}}
\end{split}
\end{align}
Now, following the argument of \cite[Lem 7.1]{FWW}, we have that \(\epsilon_I(q)^{S_I \cap S_{\mathbf d - \mathbf a}} = 0\) unless \(I\) consists of exactly \(k := q/(d(mn-1)-1)\) non-singleton blocks, such that \(\#\left(I|_{\mathbf d_i - \mathbf a_i}\right) \le n\). That is, we know each of the (non-singleton) blocks \(I_j\) has at least \(n\) elements in each column, but for these invariants to be nonzero, any extras need to be in the \(\mathbf a\) coordinates. . Denote by \(\Pi' \subset \Pi^{\mathbf d}_n\) the subset of partitions satisfying this condition. It is therefore enough to take the sum in (3) over \(\Pi'\).

For a fixed \(\mathbf a\), it is clear that there are only a bounded number of ways to distribute the \(|\mathbf a|\) ``extra'' coordinates to the \(k\) blocks. This shows that \(P := \Pi'/S_{\mathbf d - \mathbf a}\), the set of all ``shapes'' of such partitions, has bounded size. We thus have
\begin{align}
\begin{split}
(E^{p,q}_2)^{S_{\mathbf d - \mathbf a}} &= \left( \bigoplus_{I \in \Pi'} H^p(X_I; \epsilon_I(q))\right)^{S_{\mathbf d - \mathbf a}} = \bigoplus_{\rho \in P} \left( \bigoplus_{I \in \rho} H^p(X_I; \epsilon_I(q))\right)^{S_{\mathbf d - \mathbf a}} \\
&= \bigoplus_{\rho \in P} \left(\bigoplus_{I \in \rho} H^p(X_I; \epsilon_I(q))\right)^{S_{\mathbf d - \mathbf a}} =  \bigoplus_{\rho \in P} \left( \Ind^{S_{\mathbf d - \mathbf a}}_{\stab I_\rho} H^p(X_{I_\rho}; \epsilon_{I_\rho}(q))\right)^{S_{\mathbf d - \mathbf a}} \\
&= \bigoplus_{\rho \in P} H^p(X_{I_\rho}; \epsilon_{I_\rho}(q))^{\stab I_\rho}
\end{split}
\end{align}
again as in \cite[Thm 3.1, p. 38-39]{FWW}, where the last equality is by Frobenius reciprocity. Here \(I_\rho\) is some partition chosen from the class \(\rho\), and the stabilizer is taken inside the group \(S_{\mathbf d - \mathbf a}\). We have
\begin{align*}
 H^p(X_{I}; \epsilon_{I}(q))^{\stab I} &= \left(H^p(X_{I}; \epsilon_{I}(q))^{S_{I} \cap S_{\mathbf d - \mathbf a}}\right)^{\stab I/(S_{I} \cap S_{\mathbf d - \mathbf a})} \\
&= H^p(X_{I}; \epsilon_{I}(q)^{S_I \cap S_{\mathbf d - \mathbf a}})^{\stab I/(S_{I} \cap S_{\mathbf d - \mathbf a})}
\end{align*}
since \(S_I\) acts trivially on \(X_I\). Next, we claim that \(\dim \epsilon_{I}(q)^{S_I \cap S_{\mathbf d - \mathbf a}}\) is bounded by a polynomial \(q\) and uniformly bounded in \(\mathbf d\). Indeed, since \(\mathbf a\) is bounded, we know that only a finite number of the non-singleton \(I_j\)'s can be larger than an \(m \times n\) block. All the rest are \(m \times n\) blocks and singletons. Thus, among all the other \(I_j\)'s of size \(m \times n\), there are only two partitions refined by \(I_j\): the complete partitions \(\hat{0}_{I_j}\), and \(I_j\) itself.

Now, the elements of \(\epsilon_I(q)\) are chains of \(\Pi^{\mathbf d}_n(\le I)\) of length \(2r(|\mathbf d| - 1) - q\). So the size of the invariants \(\epsilon_I(q)^{S_I \cap S_{\mathbf d - \mathbf a}}\) is bounded by the the number of orbits of these chains under the action of \(S_I \cap S_{\mathbf d - \mathbf a}\). But up to this group action, such chains just look like a sequence of block formations, interspersed with a \emph{bounded} number of singleton-mergers and block-mergers in the blocks larger than \(m \times n\). Notice, first of all, that this number only depends on the number of non-singleton blocks \(k = q/(d(mn-1)-1)\) and is independent of \(\mathbf d\), since all the extra singletons don't refine any nontrivial partitions. Furthermore, the number of ways to intersperse the extra moves is bounded by \(\binom{k}{a} a!\), which is a polynomial in \(q\). So our claim is proven.

Now, we have
\[ \dim  H^p(X_{I}; \epsilon_{I}(q)^{S_I \cap S_{\mathbf d - \mathbf a}})^{\stab I/(S_{I} \cap S_{\mathbf d - \mathbf a})} \le \dim H^p(X_I; \mathbb{Q})^{\stab I/(S_{I} \cap S_{\mathbf d - \mathbf a})} \cdot \dim  \epsilon_{I}(q)^{S_I \cap S_{\mathbf d - \mathbf a}} \]
Among the non-singleton blocks of any \(I \in \Pi'\), only a bounded number will have fall within the \(\mathbf a\) coordinates; denote this number by \(b\). As discussed earlier, all of the other non-singleton blocks must have size exactly \(m \times n\). Likewise there are only a bounded number of singletons in \(I\) that fall within the \(\mathbf a\) coordinates. Denote the number of singletons in the \(i\)-th column of \(I\) by \(l_i\), and the number of these in the \(\mathbf a\) coordinates by \(c_i\). Thus
\[ X_I = X^b \times X^{k-b} \times \prod_i X^{l_i - c_i} \times X^{c_i} \]
Now, \(\stab I/(S_I \cap S_{\mathbf d - \mathbf a})\) consists of those permutations that blockwise permute the \(k-b\) blocks without any \(\mathbf a\) coordinates, as well as permutations of the non-\(\mathbf a\) singletons. Thus
\[\stab I/(S_I \cap S_{\mathbf d - \mathbf a}) \cong S_{k-b} \times S_{l_1 - c_1} \times S_{l_2 - c_2} \times \dots\]
So we have
\begin{align*}
\dim H^p(X_I; \mathbb{Q})^{\stab I/(S_I \cap S_{\mathbf d - \mathbf a}} &= \dim H^p(X_I/(\stab I/(S_I \cap S_{\mathbf d - \mathbf a})); \mathbb{Q}) \\
&= \dim H^p\left( X^b \times \Sym^{k-b} X \times \prod_i X^{c_i} \times \Sym^{l_i - c_i} X; \mathbb{Q}\right)
\end{align*}
Since \(b\) and \(c_i\) are bounded, the important terms are the \(\Sym^{k-b} X\) and \(\Sym^{l_i - c_i}\). Note that, \emph{a priori}, the second seems concerning, since it depends on \(\mathbf d\), which we need our bound to be independent of. However, Macdonald \cite{Ma1} proved that \(\dim H^i(\Sym^n X; \mathbb{Q})\) is eventually independent of \(n\), and in fact has Poincar\'e polynomial given by a rational function in \(i\), with poles at roots of unity. Such a rational function is known to be bounded by a polynomial. In particular, we conclude that \(\dim H^p(X_I; \mathbb{Q})^{\stab I/(S_I \cap S_{\mathbf d - \mathbf a}}\) is bounded by a polynomial in \(p\), uniformly in \(\mathbf d\). Since we already knew this was likewise true of \(\dim  \epsilon_{I}(q)^{S_I \cap S_{\mathbf d - \mathbf a}}\), we conclude the theorem.

\end{proof}

\section{Arithmetic statistics}
Let \(Z\) be a smooth quasiprojective scheme over \(\mathbb{Z}[1/N]\). Suppose that \(S_{\mathbf d}\) acts generically freely on \(Z\) by automorphisms, and let \(Y = Z/S_{\mathbf d}\) be the quotient, which is known to be a scheme.

For any prime power \(q \nmid N\), we can base-change \(Y\) to \(\overline{\mathbb{F}}_q\). The geometric Frobenius \(\Frob_q\) then acts on \(Y_{/\overline{\mathbb{F}}_q}\). The fixed-point set of \(\Frob_q\) is exactly \(Y(\mathbb{F}_q)\).

Fix a prime \(l \nmid q\). Since all the irreducible representations of \(S_{\mathbf d}\) are defined over \(\mathbb{Q}\), \emph{a fortiori} over \(\mathbb{Q}_l\), there is a natural correspondence between finite-dimensional representations of \(S_{\mathbf d}\) over \(\mathbb{Q}_l\) and finite-dimensional constructible \(l\)-adic sheaves on \(Y\) that become trivial when pulled back to \(Z\).

Given a representation \(V\) of \(S_{\mathbf d}\), let \(\chi_V\) be its character and let \(\mathcal{V}\) the associated sheaf on \(Y\). For any point \(y \in Y(\mathbb{F}_q)\), since \(\Frob_q\) fixes \(y\), then \(\Frob_q\) acts on the fiber \(p^{-1}(y)\). Now \(S_{\mathbf d}\) acts transitively on \(p^{-1})(y)\) with some stabilizer \(H\), and so we can identify \(p^{-1}(y)\) with \(S_{\mathbf d}/H\). The \(\Frob_q\) action on \(p^{-1}(y)\) commutes with this \(S_{\mathbf d}\) action, and so it is determined by its action on a single basepoint, which we choose once and for all to be \(H\). Now \(\Frob_q(H) = \sigma_y H\) for some \(\sigma_y \in S_{\mathbf d}\). Following Gadish \cite{Ga3}, for any \(S_{\mathbf d}\)-representation \(V\) and any coset \(\sigma_H\) of \(S_{\mathbf d}\), we set
\[ \chi_V(\sigma H) = \frac{1}{|H|} \sum_{h \in H} \chi_v(\sigma h) \]
More generally, for any class function \(P\) and \(y \in Y(\mathbb{F}_q)\), we define
\[ P(y) := \frac{1}{|H|} \sum_{h \in H} P(\sigma_y h) \]
It is straightforward to show that this is independent of the choice of coset \(H\), since the action of \(S_{\mathbf d}\) is transitive on fibers. With this notation we have
\[ \tr(\Frob_q : \mathcal{V}_y) = \chi_V(\sigma_y H) \]
The Grothendieck-Lefschetz trace formula says that
\[ \sum_{y \in Y(\mathbb{F}_q)} \tr(\Frob_q : \mathcal{V}_y) = \sum_{i} (-1)^i \tr\left(\Frob_q : H^i_{\acute et, c}(Y_{/\overline{\mathbb{F}}_q}; \mathcal{V})\right) \]
We then have the following chain of equalities, as in \cite{CEF2}, \cite{FW2}, and \cite{Ca2}:
\begin{align*}
H^i_{\acute et, c}(Y_{/\overline{\mathbb{F}}_q}; \mathcal{V}) &\cong (H^i_{\acute et, c}(Z; \pi^* \mathcal{V}))^{S_{\mathbf d}} & \text{by transfer}\\
&\cong \left(H^i_{\acute et, c}(Z; L) \otimes V\right)^{S_{\mathbf d}} & \text{by triviality of pullback} \\
&\cong \left(H^{2\dim Z - i}_{\acute et}(Z; L(-\dim Z))^* \otimes V\right)^{S_{\mathbf d}} &\text{by Poincar\'e duality} \\
&\cong \langle H^{2\dim Z - i}_{\acute et}(Z; L(-\dim Z)), V \rangle_{S_{\mathbf d}}
\end{align*}
and so we obtain
\begin{equation}
\label{single_grot}
\sum_{y \in Y(\mathbb{F}_q)} \chi_V(\sigma_y H) = q^{\dim Z} \sum_i (-1)^i \tr\left( \Frob_q : \langle H^i_{\acute et}(Z; L), V \rangle_{S_{\mathbf d}}\right)
\end{equation}

We would like to apply (\ref{single_grot}) to a collection of schemes \(Z_{\mathbf d}\) that form a co-\(\FI^m\)-scheme, and then let \(\mathbf{d} \to \infty\). To make this work, we need to know that \(Z\) satisfies \'etale representation stability, and that \(H^*(Z)\) is convergent in the sense of \S3. Following \cite{FW2} and \cite{Ca2}, we have the following.

\begin{thm}
\thlabel{fig_stats}
Suppose that \(Z\) is a smooth quasiprojective co-\(\FI^m\)-scheme over \(\mathbb{Z}[1/N]\) such that \(H^i(Z_{/\overline{\mathbb{F}}_q}; \mathbb{Q}_l)\) is a finitely-generated \(\Gal(\overline{\mathbb{F}}_q/\mathbb{F}_q)\)-\(\FI^m\)-module, and that \(H^*(Z; \mathbb{Q}_l)\) is convergent. Then for any \(\FI^m\) character polynomial \(P\),
\begin{equation}
\lim_{\mathbf d \to \infty} q^{-\dim Z_{\mathbf d}} \sum_{y \in Y_{\mathbf d}(\mathbb{F}_q)} P(y) = \sum_{i=0}^\infty (-1)^i \tr\left( \Frob_q : \langle H^i_{\acute et}(Z; L), P \rangle\right)
\end{equation}
\end{thm}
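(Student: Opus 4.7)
The plan is to apply identity (\ref{single_grot}) for each $\mathbf{d}$ separately and then let $\mathbf{d} \to \infty$, interchanging the limit with the infinite sum on the right-hand side. First, since both sides of (\ref{single_grot}) are linear in $V$ and the class function $P_{\mathbf d}$ is a $\mathbb{Z}$-linear combination of irreducible characters of $S_{\mathbf d}$, termwise application of (\ref{single_grot}) followed by recombination yields
\[ q^{-\dim Z_{\mathbf d}} \sum_{y \in Y_{\mathbf d}(\mathbb{F}_q)} P(y) \;=\; \sum_{i=0}^{\infty} (-1)^i \tr\bigl(\Frob_q : \langle H^i_{\acute et}(Z_{\mathbf d}; L), P_{\mathbf d}\rangle_{S_{\mathbf d}}\bigr) \]
for every $\mathbf d$ (for fixed $\mathbf d$ the right-hand sum has only finitely many nonzero terms). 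Theorem \thref{etale_stab} combined with Theorem 2.6 applied to $H^i_{\acute et}(Z_{\mathbf d}; L)$ then guarantees that for each fixed $i$, the $\Gal(\overline{\mathbb{F}}_q/\mathbb{F}_q)$-representation $\langle H^i_{\acute et}(Z_{\mathbf d}; L), P_{\mathbf d}\rangle_{S_{\mathbf d}}$ stabilizes to $\langle H^i_{\acute et}(Z; L), P\rangle$ for $\mathbf d \gg 0$, hence its Frobenius trace converges to the corresponding term on the right-hand side of the desired identity.

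The remaining task is to justify interchanging $\lim_{\mathbf d \to \infty}$ with $\sum_i$, which I would do via a dominated-convergence argument. The chain of isomorphisms preceding (\ref{single_grot}) realizes $\langle H^i_{\acute et}(Z_{\mathbf d}; L), P_{\mathbf d}\rangle$ as a Tate twist by $\dim Z_{\mathbf d}$ of $H^{2\dim Z_{\mathbf d} - i}_{\acute et, c}(Y_{\mathbf d}; \mathcal{P})$. Deligne's Weil~II bound on compactly supported \'etale cohomology, combined with that Tate twist, yields
\[ \bigl|\tr\bigl(\Frob_q : \langle H^i_{\acute et}(Z_{\mathbf d}; L), P_{\mathbf d}\rangle_{S_{\mathbf d}}\bigr)\bigr| \;\le\; q^{-i/2}\, \dim\langle H^i_{\acute et}(Z_{\mathbf d}; L), P_{\mathbf d}\rangle_{S_{\mathbf d}}. \]
By the convergence hypothesis on $H^*(Z; \mathbb{Q}_l)$ together with the equivalence in Proposition 5.1, the dimension factor on the right is bounded subexponentially in $i$ uniformly in $\mathbf d$. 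Choosing such a subexponential bound with growth rate strictly less than $q^{1/2}$ produces a convergent geometric dominating series, which legitimizes the interchange of limit and sum and finishes the proof.

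I expect the main obstacle to be that $q^{-i/2}$ eigenvalue estimate, because Deligne's Weil~II controls Frobenius on compactly supported cohomology while the theorem is phrased in terms of $H^i$ of the smooth scheme $Z_{\mathbf d}$. The clean $q^{-i/2}$ factor only emerges after Poincar\'e duality, when the $q^{\dim Z_{\mathbf d}}$ contributed by the Tate twist cancels exactly against the $q^{-\dim Z_{\mathbf d}}$ normalization on the left of (\ref{single_grot}); keeping these powers of $q$ straight, and then combining the resulting bound with the subexponential dimension estimate from convergence, is the delicate piece. Once that is in place, the rest of the argument is a routine limit interchange resting on Theorems \thref{etale_stab} and 2.6.
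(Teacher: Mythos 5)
Your proposal is correct and follows essentially the same strategy as the paper's proof: apply (\ref{single_grot}) termwise via linearity, use the finite-generation hypothesis together with Theorem 2.6 to get pointwise stabilization of $\tr(\Frob_q:\langle H^i_{\acute et}(Z_{\mathbf d};L),P\rangle)$ in $\mathbf d$, and then use Deligne's Weil II bound (giving a factor of $q^{-i/2}$ after the Poincar\'e duality and Tate-twist bookkeeping) combined with the subexponential bound from the convergence hypothesis to dominate the tail and justify exchanging $\lim_{\mathbf d}$ with $\sum_i$. The paper packages the dominated-convergence step as an explicit estimate $|B_{\mathbf d}-A_{\mathbf d}|\le\sum_{i>N(\mathbf d,P)}2q^{-i/2}F_P(i)$ rather than invoking dominated convergence by name, but the content is identical, and your remark about the delicate cancellation of the $q^{\pm\dim Z_{\mathbf d}}$ factors is exactly the point the paper relies on.
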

\begin{proof}
Since each \(Z_{\mathbf d}\) is smooth quasiprojective, we can apply (\ref{single_grot}) to it. By linearity, we can replace a representation \(V\) of \(S_{\mathbf d}\) with a virtual representation given by a character polynomial \(P\), so we obtain
\[
q^{-\dim Z_{\mathbf d}} \sum_{y \in Y_{\mathbf d}(\mathbb{F}_q)} P(y) = \sum^{2 \dim Z_{\mathbf d}}_{i=0} (-1)^i \tr\left( \Frob_q : \langle H^i_{\acute et}(Z_{\mathbf d}; L), P \rangle_{S_{\mathbf d}}\right)
\]
Call this sum \(A_{\mathbf d}\). Furthermore, let
\[ B_{\mathbf d} = \sum_{i=0}^{2 \dim Z_{\mathbf d}} (-1)^i \tr\left( \Frob_q : \langle H^i_{\acute et}(Z; L), P\rangle\right) \]
Our goal is thus to show that
\[\lim_{\mathbf d \to \infty} A_{\mathbf d} = \lim_{\mathbf d \to \infty} B_{\mathbf d},\]
that is, first of all to show that both sides converge, and that their limits are equal.

By the assumption that \(H^*(Z)\) is convergent, we know that there is a function \(F_P(i)\) which is subexponential in \(i\), such that for all \(\mathbf d\),
\[ |\langle H^i_{\acute et}(Z_{\mathbf d}; L), P\rangle_{S_{\mathbf d}}| \le F_P(i) \]
and so by taking \(\mathbf d\) large enough,
\[ |\langle H^i_{\acute et}(Z; L), P\rangle| \le F_P(i) \]
Furthermore, by \cite[Thm 1.6]{De}, we know that
\[ \left|\tr\left( \Frob_q : \langle H^i_{\acute et}(Z_{\mathbf d}; L), P \rangle_{S_{\mathbf d}}\right)\right| \le q^{-i/2} \left|\langle H^i_{\acute et}(Z_{\mathbf d}; L), P \rangle_{S_{\mathbf d}}\right| \]
We therefore have
\begin{align*}
|A_{\mathbf d}| &\le \sum_{i=0}^{2 \dim Z_{\mathbf d}} \left|\tr\left( \Frob_q : \langle H^i_{\acute et}(Z_{\mathbf d}; L), P \rangle_{S_{\mathbf d}}\right)\right| \\
&\le \sum_{i=0}^{2 \dim Z_{\mathbf d}} \left|\langle H^i_{\acute et}(Z_{\mathbf d}; L), P \rangle_{S_{\mathbf d}}\right| \\
&\le \sum_{i=0}^{2 \dim Z_{\mathbf d}} q^{-i/2} F_P(i).
\end{align*}
By exactly the same argument, \(|B_{\mathbf d}| \le \sum_{i=0}^{2 \dim Z_{\mathbf d}} q^{-i/2} F_P(i)\). Since \(F_P(i)\) is subexponential in \(i\), this means that both \(A_{\mathbf d}\) and \(B_{\mathbf d}\) converge.

It remains to show that \(\lim_{\mathbf d \to \infty} A_{\mathbf d} - B_{\mathbf d} = 0\). Let \(N(\mathbf d, P)\) be the number such that
\[ \langle H^i_{\acute et}(Z_{\mathbf d}), P\rangle_{S_{\mathbf d}} = \langle H^i_{\acute et}(Z), P\rangle \;\; \text{ for all } i \le N(\mathbf d, P). \]
We thus have
\begin{align*}
|B_{\mathbf d} - A_{\mathbf d}| &\le \sum_{i = 0}^{2 \dim Z_{\mathbf d}} q^{-i/2} \left| \langle H^i_{\acute et}(Z), P \rangle - \langle H^i_{\acute et}(Z_{\mathbf d}), P \rangle_{S_{\mathbf d}} \right| \\
&= \sum_{i = N(\mathbf d, P) + 1}^{2 \dim Z_{\mathbf d}} q^{-i/2} \left| \langle H^i_{\acute et}(Z), P \rangle - \langle H^i_{\acute et}(Z_{\mathbf d}), P \rangle_{S_{\mathbf d}} \right| \\
&\le \sum_{i = N(\mathbf d, P) + 1}^{2 \dim Z_{\mathbf d}} 2q^{-i/2} F_P(i)
\end{align*}
Since \(N(\mathbf d, P) \to \infty\) as \(\mathbf d \to \infty\), and \(F_p(i)\) is subexponential in \(i\), we conclude that \(|B_{\mathbf d} - A_{\mathbf d}|\) becomes arbitrarily small as \(\mathbf d \to \infty\).
\end{proof}

We can now apply this to \(\widetilde{\mathcal{Z}}^{\mathbf d}_n(X)\) to obtain Theorem 1.4.

\begin{proof}[Proof of Theorem 1.4]
By Theorem 1.2, \(H^i_{\acute et}(\widetilde{\mathcal{Z}}^{\bullet}_n(X)_{/\overline{\mathbb{F}}_q}, \mathbb{Q}_l)\) is a finitely generated \(\Gal(\overline{\mathbb{F}}_q/\mathbb{F}_q)\)-\(\FI^m\)-module. By Theorem 1.3, \(H^*_{\acute et}(\widetilde{\mathcal{Z}}^{\bullet}_n(X)_{/\overline{\mathbb{F}}_q}, \mathbb{Q}_l)\) is convergent. We thus conclude by Theorem 5.1.
\end{proof}

\bibliography{general}

\end{document}